\renewcommand{\pod}[1]{\mathchoice
  {\allowbreak \if@display \mkern 18mu\else \mkern 8mu\fi (#1)}
  {\allowbreak \if@display \mkern 18mu\else \mkern 8mu\fi (#1)}
  {\mkern4mu(#1)}
  {\mkern4mu(#1)}
}
\newtheorem{theorem}{Theorem}[section]
\newtheorem{lemma}[theorem]{Lemma}
\newtheorem{proposition}[theorem]{Proposition}
\theoremstyle{remark}
\theoremstyle{definition}
\newtheorem{definition}[theorem]{Definition}
\newcommand{\cA}{\mathcal{A}}
\newcommand{\cC}{\mathcal{C}}
\newcommand{\cM}{\mathcal{M}}
\newcommand{\cP}{\mathcal{P}}
\newcommand{\cX}{\mathcal{X}}
\newcommand{\Li}{\operatorname{Li}}
\newcommand{\bN}{\mathbb{N}}
\newcommand{\bF}{\mathbb{F}}
\newcommand{\bQ}{\mathbb{Q}}
\newcommand{\bZ}{\mathbb{Z}}
\newcommand{\mfa}{\mathfrak{a}}
\newcommand{\mfi}{\mathfrak{i}}
\newcommand{\mfP}{\mathfrak{P}}
\newcommand{\mfp}{\mathfrak{p}}
\newcommand{\mfq}{\mathfrak{q}}
\newcommand{\mfs}{\mathfrak{s}}
\newcommand{\mfu}{\mathfrak{u}}
\newcommand{\mfv}{\mathfrak{v}}
\newcommand{\efp}{\#E(\bF_p)}
\newcommand{\ol}{\overline}
   \def\MR#1{}
\title[Extreme values of $\omega(\efp)$]{Orders of reductions of elliptic curves with many and few prime factors}
\begin{document}

\author{Lee Troupe}
\address{Department of Mathematics, Boyd Graduate Studies Research Center, University of Georgia, Athens, GA 30602, USA}
\email{ltroupe@math.uga.edu}
\let\thefootnote\relax\footnote{The author was partially supported by NSF RTG Grant DMS-1344994.}

\begin{abstract}
In this paper, we investigate extreme values of $\omega(\efp)$, where $E/\bQ$ is an elliptic curve with complex multiplication and $\omega$ is the number-of-distinct-prime-divisors function. For fixed $\gamma > 1$, we prove that
\[
\#\{p \leq x : \omega(\efp) > \gamma\log\log x\} = \frac{x}{(\log x)^{2 + \gamma\log\gamma - \gamma + o(1)}}.
\]
The same result holds for the quantity $\#\{p \leq x : \omega(\efp) < \gamma\log\log x\}$ when $0 < \gamma < 1$. The argument is worked out in detail for the curve $E : y^2 = x^3 - x$, and we discuss how the method can be adapted for other CM elliptic curves.
\end{abstract}

\maketitle

\section{Introduction}


Let $E/\bQ$ be an elliptic curve. For primes $p$ of good reduction, one has
\[
E(\bF_p) \simeq \bZ/d_p\bZ \oplus \bZ/e_p\bZ
\]
where $d_p$ and $e_p$ are uniquely determined natural numbers such that $d_p$ divides $e_p$. Thus, $\efp = d_pe_p$. We concern ourselves with the behavior $\omega(\efp)$, where $\omega(n)$ denotes the number of distinct prime factors of the number $n$, as $p$ varies over primes of good reduction. Work has been done already in this arena: If the curve $E$ has CM, Cojocaru \cite[Corollary 6]{coj05} showed that the normal order of $\omega(\efp)$ is $\log\log p$, and a year later, Liu \cite{liu06} established an elliptic curve analogue of the celebrated Erd{\H o}s - Kac theorem: For any elliptic curve $E/\bQ$ with CM, the quantity
\[
\frac{\omega(\efp) - \log\log p}{\sqrt{\log\log p}}
\]
has a Gaussian normal distribution. In particular, $\omega(\efp)$ has normal order $\log \log p$ and standard deviation $\sqrt{\log\log p}$. (These results hold for elliptic curves without CM, if one assumes GRH.)

In light of the Erd{\H o}s - Kac theorem, one may ask how often $\omega(n)$ takes on extreme values, e.g. values greater than $\gamma \log \log n$, for some fixed $\gamma > 1$. A more precise version of the following result appears in \cite{Erd1978-1979}; its proof is due to Delange.

\begin{theorem}
Fix $\gamma > 1$. As $x \to \infty$,
\[
\#\{n \leq x : \omega(n) > \gamma\log\log x\} = \frac{x}{(\log x)^{1 + \gamma\log\gamma - \gamma + o(1)}}.
\]
\end{theorem}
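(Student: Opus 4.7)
The plan is to use the classical two-sided attack that goes back to Hardy-Ramanujan and was made quantitative by Delange: an upper bound from a Markov-type inequality applied to the tilted counting function $\alpha^{\omega(n)}$, and a matching lower bound from the Landau / Selberg-Sathe asymptotic for $\pi_k(x) := \#\{n \leq x : \omega(n) = k\}$.

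For the upper bound, I would start from
\[
\#\{n \leq x : \omega(n) > \gamma \log\log x\} \leq \alpha^{-\gamma \log\log x} \sum_{n \leq x} \alpha^{\omega(n)},
\]
valid for every real $\alpha > 1$. Since the Dirichlet series $\sum \alpha^{\omega(n)} n^{-s}$ factors as $\zeta(s)^{\alpha} G(s)$ with $G$ holomorphic and bounded in a half-plane $\Re(s) > 1/2$, the Selberg-Delange method gives $\sum_{n \leq x} \alpha^{\omega(n)} \sim c(\alpha)\, x (\log x)^{\alpha - 1}$. The right-hand side of the Markov bound is then $\ll x (\log x)^{\alpha - 1 - \gamma \log \alpha}$, and a one-variable optimization (set the $\alpha$-derivative of the exponent to zero) picks out $\alpha = \gamma$, producing $\gamma - 1 - \gamma \log \gamma = -(1 + \gamma \log \gamma - \gamma)$, which is exactly the claimed exponent.

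For the lower bound, I would use the uniform Landau asymptotic
\[
\pi_k(x) \sim \frac{x}{\log x} \cdot \frac{(\log\log x)^{k-1}}{(k-1)!},
\]
valid for $k$ in a range comfortably containing $\gamma \log\log x$, and plug in $k = \lceil \gamma \log\log x \rceil$. Stirling's formula applied to $(k-1)!$ converts the ratio $(\log\log x)^{k-1}/(k-1)!$ into $(\log x)^{\gamma - \gamma \log \gamma + o(1)}$, giving $\pi_k(x) \gg x / (\log x)^{1 + \gamma \log \gamma - \gamma + o(1)}$, which is already a lower bound for the left-hand side of the theorem.

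The main obstacle is uniformity: both asymptotics have to remain valid with error terms that do not swamp the main term when $\alpha$ or $k$ is allowed to depend on $x$. In Delange's original treatment this is handled via a Perron-type contour integral for $\zeta(s)^{\alpha} G(s)$ together with careful bookkeeping of how the implied constants grow in $\alpha$. Once both asymptotics are in hand with the required uniformity, the proof reduces to the two elementary optimizations, and these agree precisely because $\alpha = \gamma$ is the saddle point in both the generating-function and direct-counting pictures, which is what makes the same exponent $1 + \gamma \log \gamma - \gamma$ appear in the upper and lower bounds.
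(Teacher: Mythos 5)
Your outline is correct and is essentially Delange's own argument, which the paper \emph{cites} rather than proves: it simply points to the Erd{\H o}s survey and attributes the proof to Delange, so there is no in-paper proof to compare against. Two remarks are nonetheless worth making. First, you worry about uniformity in $\alpha$, but once you optimize the Chernoff bound and set $\alpha=\gamma$, the tilt parameter is fixed (since $\gamma$ is); the only genuine uniformity issue is in the Sathe--Selberg/Landau asymptotic for $\pi_k(x)$ in the range $k\asymp\log\log x$, and that uniformity is standard. Second, and more relevant to the paper: the argument the paper actually develops for the analogue $\omega(\efp)$ (Theorems 3.1 and 4.1, combining to give Theorem 1.2) is an elementary sieve-theoretic one rather than your generating-function one. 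For the upper bound it first discards $n$ without a large, simple largest prime factor, writes $n=a\cdot P$ with $\omega(a)=k-1$ and $P$ prime, counts the inner sum by Brun's sieve, and finishes with a multinomial inequality; for the lower bound it fixes a squarefree Gaussian modulus $\mfs$ with $\omega(\mfs)=k$, sieves for $\pi\equiv 1\pmod{\mfs}$ with $\Vert\pi-1\Vert/\Vert\mfs\Vert$ free of small prime factors, and sums over $\mfs$ using the elementary symmetric function bound (Lemma 2.4). The sieve route is lossier---it produces the extra $(\log_2 x)^5$ in Theorem 3.1---but it transfers to $\efp$ via shifted Gaussian primes, whereas the Dirichlet series $\sum_p \alpha^{\omega(\efp)}p^{-s}$ that your approach would require is not readily put in Selberg--Delange form. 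So your method proves the classical Theorem 1.1 cleanly, but it would not adapt to the paper's main result without substantial new input.
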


Presently, we establish an analogous theorem for the quantity $\omega(\efp)$, where $E/\bQ$ is an elliptic curve with CM.

\begin{theorem}
Let $E/\bQ$ be an elliptic curve with CM. For $\gamma > 1$ fixed,
\[
\#\{p \leq x : \omega(\efp) > \gamma\log\log x\} = \frac{x}{(\log x)^{2 + \gamma\log\gamma - \gamma + o(1)}}.
\]

\noindent The same result holds for the quantity $\#\{p \leq x : \omega(\efp) < \gamma\log\log x\}$ when $0 < \gamma < 1$.
\end{theorem}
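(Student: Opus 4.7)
The plan is to adapt the Selberg--Delange / Sathe--Selberg method, exploiting the CM structure of $E: y^2 = x^3 - x$ to translate the counting problem into one about shifted Gaussian primes. Writing $K = \bQ(i)$, Deuring's theorem gives for each split prime $p \equiv 1 \pmod 4$ a Gaussian prime $\pi$ with $p = N(\pi)$ and $\#E(\bF_p) = N_{K/\bQ}(\pi - 1)$; inert primes $p \equiv 3 \pmod 4$ have $\#E(\bF_p) = p+1$, so the classical Erd\H{o}s--Delange theorem applied to $\omega(p+1)$ handles them with the same exponent. On the split side, $\omega(N(\pi - 1))$ agrees with the ideal count $\omega_K((\pi-1)\bZ[i])$ except when both conjugate primes above a rational prime divide $\pi - 1$; this discrepancy is absorbed in the $o(1)$.

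For the upper bound when $\gamma > 1$, apply Markov's inequality with the weight $\gamma^{\omega(\cdot)}$:
\[
\#\{p \leq x : \omega(\efp) > \gamma \log\log x\} \leq (\log x)^{-\gamma \log \gamma} \sum_{p \leq x} \gamma^{\omega(\efp)}.
\]
Expanding $\gamma^{\omega(n)} = \sum_{d \mid n} \mu^2(d)(\gamma-1)^{\omega(d)}$ reduces the right-hand sum to a weighted sum over squarefree $d$ of $\#\{p \leq x : d \mid \efp\}$. By CM theory, $d \mid \efp$ is a Frobenius condition on $p$ in a ray class field of $K$, and Hecke's prime ideal theorem (equivalently Chebotarev) yields a main term $\pi(x)/\psi_E(d)$ for a multiplicative function $\psi_E$ with $\psi_E(\ell) \asymp \ell$. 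The resulting Euler product has a singularity of order $\gamma - 1$ at $s = 1$, so a standard Selberg--Delange contour argument gives $\sum_{p \leq x} \gamma^{\omega(\efp)} \ll x (\log x)^{\gamma - 2}$, completing the upper bound. The small-values case $0 < \gamma < 1$ follows by reversing Markov's inequality, using that $\gamma^{\omega(n)}$ is then decreasing in $\omega(n)$.

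For the lower bound we establish the Sathe--Selberg asymptotic
\[
\#\{p \leq x : \omega(\efp) = k\} \gg \frac{x}{(\log x)^2} \cdot \frac{(\log\log x)^{k-1}}{(k-1)!}
\]
uniformly for $k$ near $\gamma \log\log x$; applying Stirling's formula to $(k-1)!$ at $k = \lfloor \gamma \log\log x \rfloor$ then converts the right-hand side into $x/(\log x)^{2 + \gamma\log\gamma - \gamma + o(1)}$. This asymptotic is obtained by analyzing the generating series $\sum_{\pi} z^{\omega_K(\pi - 1)}$ over Gaussian primes $\pi$ with $N(\pi) \leq x$ via the same sieve identity, extracting the coefficient of $z^k$ by a saddle-point method; the dominant contribution comes from Gaussian primes with $\pi \equiv 1 \pmod{\mathfrak{l}_1 \cdots \mathfrak{l}_k}$ for $k$-tuples of distinct small Gaussian prime ideals, counted by Hecke's prime ideal theorem in ray classes. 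The main obstacle throughout is uniformity in the modulus $\mathfrak{d}$, which ranges over ideals of norm up to a fixed power of $\log x$; this is supplied by a Siegel--Walfisz analogue for Hecke $L$-functions (the prime ideal theorem in ray class groups with de la Vall\'ee Poussin-type error term), after which the adaptation of the classical arguments from $\bZ$ to $\bZ[i]$ is technically routine.
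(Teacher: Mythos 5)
Your proposal takes a genuinely different route from the paper. The paper does not use Selberg--Delange at all: both bounds are proved by hand with a version of Brun's pure sieve over $\bZ[i]$ (Theorem~\ref{brun}), Mertens' theorem in the quadratic field (Proposition~\ref{mertens}), Stirling's formula, a classical inequality for elementary symmetric functions (Lemma~\ref{elementary}), and, crucially, Huxley's analogue of the Bombieri--Vinogradov theorem in number fields (Proposition~\ref{bv}). Your Rankin/tilting idea for the upper bound is sound in outline: optimizing $y^{-\gamma\log_2 x}\sum_{p\le x} y^{\omega(\efp)}$ at $y=\gamma$ does produce the right exponent, and the identity $\gamma^{\omega(n)}=\sum_{d\mid n}\mu^2(d)(\gamma-1)^{\omega(d)}$ is the right starting point. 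But the phrase ``standard Selberg--Delange contour argument'' glosses over the fact that $\sum_p\gamma^{\omega(\efp)}p^{-s}$ has no Euler product; one must instead pass through the divisor expansion and bound $\sum_d(\gamma-1)^{\omega(d)}\#\{p\le x:d\mid\efp\}$ directly, which requires a level-of-distribution input (BV-type on average) and not mere analytic continuation, especially once $\gamma>2$ so that $(\gamma-1)^{\omega(d)}$ is unbounded. Similarly, the assertion that the supersingular case $p\equiv 3\pmod 4$, $\efp=p+1$, is handled by ``the classical Erd\H{o}s--Delange theorem'' is circular: the large-deviation count for $\omega(p+1)$ over shifted primes is itself a theorem of the same depth as the one you are proving, not a classical fact about $\omega(n)$.

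The lower bound has a genuine gap. You propose to count Gaussian primes $\pi\equiv 1\pmod{\mathfrak{l}_1\cdots\mathfrak{l}_k}$ with $k\approx\gamma\log_2 x$ and to supply the needed uniformity in the modulus by ``a Siegel--Walfisz analogue for Hecke $L$-functions,'' on the grounds that the moduli have norm at most a fixed power of $\log x$. That last estimate is incorrect. An ideal of $\bZ[i]$ with $k$ distinct prime ideal divisors has norm at least the product of the $k$ smallest prime ideal norms, which is $\exp\{(1+o(1))\,k\log k\}$; for $k\asymp\log_2 x$ this is $(\log x)^{(c+o(1))\log_3 x}$, exceeding every fixed power of $\log x$. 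Worse, the range of moduli that actually carries the main term is far larger: to accumulate $\sum_{\mfp}1/\Phi(\mfp)\sim\tfrac12\log_2 x$ one must take prime ideals with norms up to a small power of $x$, and indeed the paper works with $\|\mfs\|\le x^{1/10}$. Siegel--Walfisz (even in its Hecke form) is provably too weak for this; you must control the discrepancies $|\pi(x;\mfq,1)-c\,\Li(x)/\Phi(\mfq)|$ on average over $\|\mfq\|\le x^{\theta}$, which is precisely what Huxley's Bombieri--Vinogradov analogue provides and what the paper's Lemma~\ref{lowerboundonm} uses via Cauchy--Schwarz. A secondary but real issue: the discrepancy between $\omega(N(\pi-1))$ and $\omega_K((\pi-1))$ is not something to ``absorb into the $o(1)$.'' Since $\omega(N(\pi-1))\le\omega_K((\pi-1))$, a lower bound for the ideal-theoretic $\omega$ does not transfer to the rational one. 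The paper sidesteps this by insisting the sieving modulus $\mfs$ have prime ideal factors above distinct rational primes $p\equiv 1\pmod 4$ (conditions (D) and (E) below Theorem~\ref{lowerbound}), so the $k$ forced ideal factors genuinely contribute $k$ distinct rational primes to $\efp$; your write-up needs an analogous device.
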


In what follows, the above theorem will be proved for $E/\bQ$ with $E : y^2 = x^3 - x$. Essentially the same method can be used for any elliptic curve with CM; refer to the discussion in \S 4 of \cite{poltitec}. To establish the theorem, we prove corresponding upper and lower bounds in sections \S 3 and \S 4, respectively.

\bigskip

\noindent\textit{Remark.} One can ask similar questions about other arithmetic functions applied to $\efp$. For example, Pollack has shown \cite{poltitec} that, if $E$ has CM, then
\[
\sideset{}{'}\sum_{p \leq x} \tau(\efp) \sim c_E \cdot x,
\]
where the sum is restricted to primes $p$ of good ordinary reduction for $E$. Several elements of Pollack's method of proof will appear later in this manuscript.

\bigskip

\noindent\textbf{Notation.} $K$ will denote an extension of $\bQ$ with ring of integers $\bZ_K$. For each ideal $\mathfrak{a} \subset \bZ_K$, we write $\Vert \mfa \Vert$ for the norm of $\mfa$ (that is, $\Vert \mfa \Vert = \#\bZ_K/\mfa$) and $\Phi(\mfa) = \#(\bZ_K/\mfa)^{\times}$. The function $\omega$ applied to an ideal $\mfa \subset \bZ_K$ will denote the number of distinct prime ideals appearing in the factorization of $\mfa$ into a product of prime ideals. For $\alpha \in \bZ_K$, $\Vert \alpha \Vert$ and $\Phi(\alpha)$ denote those functions evaluated at the ideal $(\alpha)$. If $\alpha$ is invertible modulo an ideal $\mfu \subset \bZ_K$, we write $\gcd(\alpha, \mfu) = 1$. The notation $\log_kx$ will be used to denote the $k$th iterate of the natural logarithm; this is not to be confused with the base-$k$ logarithm. The letters $p$ and $q$ will be reserved for rational prime numbers. We make frequent use of the notation $\ll, \gg$ and $O$-notation, which has its usual meaning. Other notation may be defined as necessary.

\noindent\textbf{Acknowledgements.} The author thanks Paul Pollack for a careful reading of this manuscript and many helpful suggestions.

\section{Useful propositions}

One of our primary tools will be a version of Brun's sieve in number fields. The following theorem can be proved in much the same way that one obtains Brun's pure sieve in the rational integers, cf. \cite[\S 6.4]{polnabd}.

\begin{theorem}\label{brun}
Let $K$ be a number field with ring of integers $\bZ_K$. Let $\cA$ be a finite sequence of elements of $\bZ_K$, and let $\cP$ be a finite set of prime ideals. Define 
\[
S(\cA, \cP) := \#\{a \in \cA : \gcd(a, \mfP) = 1\},\text{     where   } \mfP := \prod_{\mfp \in \cP} \mfp.
\]
For an ideal $\mfu \subset \bZ_K$, write $A_\mfu := \#\{a \in \cA : a \equiv 0 \pmod \mfu\}$. Let $X$ denote an approximation to the size of $\cA$. Suppose $\delta$ is a multiplicative function taking values in $[0, 1]$, and define a function $r(\mfu)$ such that
\[
A_\mfu = X\delta(\mfu) + r(\mfu)
\]
for each $\mfu$ dividing $\mfP$. Then, for every even $m \in \bZ^{+}$,
\[
S(\cA, \cP) = X\prod_{\mfp \in \cP} (1 - \delta(\mfp)) + O\bigg(\sum_{\mfu \mid \mfP, \, \omega(\mfu) \leq m} |r(\mfu)|\bigg) + O\bigg(X \sum_{\mfu \mid \mfP, \, \omega(\mfu) \geq m} \delta(\mfu)\bigg).
\]
All implied constants are absolute.
\end{theorem}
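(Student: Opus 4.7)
The plan is to imitate the classical proof of Brun's pure sieve in $\bZ$, relying on the fact that $\bZ_K$ is a Dedekind domain with unique factorization of ideals. As a consequence the Möbius function extends to nonzero ideals by $\mu(\mfu) = (-1)^{\omega(\mfu)}$ on squarefree $\mfu$ and $0$ otherwise; the detection formula $\sum_{\mfu \mid \mfv}\mu(\mfu) = [\mfv = (1)]$ holds; and multiplicativity over coprime ideals behaves exactly as in $\bZ$. Every step of the standard $\bZ$-argument uses only these features, so no new sieve-theoretic ingredient is required, and the transcription is almost literal.

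First I would write the exact identity
\[
S(\cA,\cP) = \sum_{a\in\cA}\sum_{\mfu\mid\gcd((a),\mfP)}\mu(\mfu) = \sum_{\mfu\mid\mfP}\mu(\mfu)\,A_\mfu,
\]
and then invoke the Bonferroni truncation: the partial sums $T_k(\mfv) := \sum_{\mfu \mid \mfv,\,\omega(\mfu) \le k}\mu(\mfu)$ satisfy $T_k(\mfv) \ge [\mfv = (1)]$ for even $k$ and $T_k(\mfv) \le [\mfv = (1)]$ for odd $k$, both a consequence of the purely combinatorial identity $\sum_{j=0}^{k}(-1)^j\binom{n}{j} = (-1)^k\binom{n-1}{k}$ applied with $n = \omega(\mfv)$. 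Summing these inequalities for the given even $m$ (upper) and for $m-1$ (lower) over $a\in\cA$, and using that the two resulting bounds differ by $\sum_{\omega(\mfu)=m,\,\mfu\mid\mfP} A_\mfu$, I would obtain the sandwich
\[
S(\cA,\cP) = \sum_{\mfu\mid\mfP,\,\omega(\mfu)\le m}\mu(\mfu)\,A_\mfu + O\bigg(\sum_{\mfu\mid\mfP,\,\omega(\mfu)=m} A_\mfu\bigg).
\]
Substituting $A_\mfu = X\delta(\mfu) + r(\mfu)$ into the main sum and using multiplicativity of $\delta$ to recognize
\[
\sum_{\mfu\mid\mfP,\,\omega(\mfu)\le m}\mu(\mfu)\delta(\mfu) = \prod_{\mfp\in\cP}(1 - \delta(\mfp)) - \sum_{\mfu\mid\mfP,\,\omega(\mfu) > m}\mu(\mfu)\delta(\mfu),
\]
yields the main term $X\prod_{\mfp\in\cP}(1-\delta(\mfp))$, a tail $O\bigl(X\sum_{\omega(\mfu)\ge m}\delta(\mfu)\bigr)$, and a remainder contribution $O\bigl(\sum_{\omega(\mfu)\le m}|r(\mfu)|\bigr)$. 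The Bonferroni error is dispatched by the same substitution: $A_\mfu \le X\delta(\mfu) + |r(\mfu)|$ (using $0\le\delta\le 1$) splits $\sum_{\omega(\mfu)=m}A_\mfu$ into one piece absorbed by the $\delta$-tail, since $\{\omega=m\}\subseteq\{\omega\ge m\}$, and one piece absorbed by the remainder error, since $\{\omega=m\}\subseteq\{\omega\le m\}$.

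I do not expect any genuine analytic obstacle; the entire proof is bookkeeping. The one substantive step is the Bonferroni inequality for the truncated Möbius sum over ideal divisors, and that reduces via unique factorization to the binomial identity above. With that in hand, and with the care described in the previous paragraph to keep the two pieces of the Bonferroni error inside the two advertised error sets, the statement as written follows.
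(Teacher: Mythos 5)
Your argument is correct and is exactly the approach the paper has in mind: the paper offers no proof, only the remark that one transcribes the standard proof of Brun's pure sieve (cf.\ Pollack, \emph{Not Always Buried Deep}, \S 6.4) to the Dedekind-domain setting, which is what you have done. The Möbius inversion over ideals, the Bonferroni truncation via the binomial identity, and the splitting of the $\omega(\mfu)=m$ error between the $\delta$-tail and the remainder sum are all present and carried out correctly; the only superfluous remark is the appeal to $0 \le \delta \le 1$ when bounding $A_\mfu \le X\delta(\mfu) + |r(\mfu)|$, which follows already from $r(\mfu) \le |r(\mfu)|$.
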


In our estimation of $O$-terms arising from the use of Proposition \ref{brun}, we will make frequent use of the following analogue of the Bombieri-Vinogradov theorem, which we state for an arbitrary imaginary quadratic field $K/\bQ$ with class number 1. For $\alpha \in \bZ_K$ and an ideal $\mfq \subset \bZ_K$, write
\[
\pi(x; \mfq, \alpha) = \#\{\mu \in \bZ_K : \Vert \mu \Vert \leq x, \mu \equiv \alpha \pmod \mfq\}.
\]

\begin{proposition}\label{bv}
For every $A > 0$, there is a $B > 0$ so that
\[
\sum_{\Vert \mfq \Vert \leq x^{1/2}(\log x)^{-B}} \max_{\alpha: \gcd(\alpha, \mfu) = 1} \max_{y \leq x} \vert \pi(y; \mfq, \alpha) - w_K \cdot \frac{\Li(y)}{\Phi(\mfq)} \vert \ll \frac{x}{(\log x)^A},
\]
where the above sum and maximum are taken over $\mfq \subset \bZ_K$ and $\alpha \in \bZ_K$. Here $w_K$ denotes the size of the group of units of $\bZ_K$
\end{proposition}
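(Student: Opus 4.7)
The plan is to mimic the classical proof of the Bombieri--Vinogradov theorem, replacing Dirichlet characters modulo $q$ by Hecke ray-class characters modulo $\mfq$, and rational primes in arithmetic progressions by prime elements of $\bZ_K$ (equivalently, since $K$ has class number $1$, prime ideals counted once for each of the $w_K$ unit associates of a generator) in ray classes mod $\mfq$. The three pillars of the argument are: a large-sieve inequality for Hecke characters on $\bZ_K$, a Siegel--Walfisz theorem for prime ideals in ray classes, and Vaughan's identity applied to the von Mangoldt function $\Lambda_K$ defined on ideals of $\bZ_K$.

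First, orthogonality of the characters of $(\bZ_K/\mfq)^{\times}$ lets me detect the congruence $\mu\equiv\alpha\pmod{\mfq}$. Grouping the prime elements $\mu$ into their $w_K$ associate classes and isolating the contribution of the principal character as the main term $w_K\Li(y)/\Phi(\mfq)$, the problem reduces to bounding, on average over $\mfq$ of norm at most $x^{1/2}(\log x)^{-B}$,
\[
\frac{w_K}{\Phi(\mfq)}\max_{y\leq x}\left|\sum_{\chi\neq\chi_0\pmod{\mfq}}\bar\chi(\alpha)\sum_{\Vert\mfp\Vert\leq y}\chi(\mfp)\right|.
\]
Partial summation converts the inner sum to one weighted by $\Lambda_K$.

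Next, I would apply Vaughan's identity to $\Lambda_K(\mfn)$, with cut-off parameters $U=V=x^{1/3}$, to decompose each twisted prime sum into a Type-I piece (one smooth variable) and a Type-II bilinear piece in pairs of ideals. Both pieces are then controlled by the Huxley--Hinz large-sieve inequality
\[
\sum_{\Vert\mfq\Vert\leq Q}\frac{\Vert\mfq\Vert}{\Phi(\mfq)}\sum_{\chi\pmod{\mfq}}^{*}\left|\sum_{\Vert\mfn\Vert\leq N}a_{\mfn}\chi(\mfn)\right|^{2}\ll(Q^{2}+N)\sum_{\Vert\mfn\Vert\leq N}|a_{\mfn}|^{2},
\]
for Hecke characters on $\bZ_K$, with the factor $\Vert\mfq\Vert/\Phi(\mfq)$ absorbed on average by a Mertens-type estimate in $\bZ_K$. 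This controls the range $(\log x)^{C}<\Vert\mfq\Vert\leq x^{1/2}(\log x)^{-B}$ with total error $\ll x(\log x)^{O(1)-A}$.

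For the complementary small moduli $\Vert\mfq\Vert\leq(\log x)^{C}$, the Landau--Fogels zero-free region for Hecke $L$-functions, together with a Siegel--Brauer treatment of any exceptional character, yields a Siegel--Walfisz theorem $\pi(y;\mfq,\alpha)=w_K\Li(y)/\Phi(\mfq)+O(y\exp(-c_A\sqrt{\log y}))$ uniform in that range, which handles the small-modulus contribution trivially. The main obstacle is the balancing act: choosing $C$ large enough to make the Siegel--Walfisz error summable over $\mfq$, while choosing $B$ large enough that the logarithmic losses in Vaughan's identity and the large sieve are absorbed; this parameter bookkeeping is routine but fiddly, and it is what ultimately determines $B=B(A)$.
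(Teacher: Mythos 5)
Your sketch is a reasonable outline of a from-scratch proof, but it does far more work than the paper actually does: the paper simply invokes Huxley's number-field analogue of the Bombieri--Vinogradov theorem \cite{hux71} and points to the discussion in \cite[Lemma~2.3]{poltitec} for the translation to the present formulation in terms of prime \emph{elements} of $\bZ_K$ (rather than prime ideals in ray classes), which is where the factor $w_K$ and the class-number-one hypothesis enter. What you have written is, in effect, an outline of how Huxley's theorem itself is proved — orthogonality over $(\bZ_K/\mfq)^\times$, Vaughan's identity for $\Lambda_K$, a large sieve for Hecke characters, and a Siegel--Walfisz input from zero-free regions for Hecke $L$-functions. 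The two routes arrive at the same place; yours is self-contained, while the paper's is a one-line citation. If you do carry out the full argument, be a little careful at the step where you ``group prime elements into $w_K$ associate classes'': associates of $\mu$ generally land in \emph{different} residue classes modulo $\mfq$, so the grouping is only relevant after orthogonality has been applied, when the principal character collects all prime elements coprime to $\mfq$, and the $w_K$ appears because each prime ideal contributes $w_K$ generators to that total. Also note that characters of $(\bZ_K/\mfq)^\times$ need not be trivial on the unit group and so are not literally ray-class (Hecke) characters; since the unit group is finite for imaginary quadratic $K$, one passes to Hecke characters by averaging over the finitely many unit twists, and this bookkeeping should be made explicit. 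None of these points is an error in your plan — they are exactly the ``fiddly'' details you flag — but they are the reason the paper prefers to cite the already-completed argument rather than redo it.
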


The above follows from Huxley's analogue of the Bombieri-Vinogradov theorem for number fields \cite{hux71}; see the discussion in \cite[Lemma 2.3]{poltitec}.

The following proposition is an analogue of Mertens' theorem for imaginary quadratic fields. It follows immediately from Theorem 2 of \cite{ros99}.

\begin{proposition}\label{mertens}
Let $K/\bQ$ be an imaginary quadratic field and let $\alpha_K$ denote the residue of the associated Dedekind zeta function, $\zeta_K(s)$, at $s = 1$. Then
\[
\prod_{\Vert \mfp \Vert \leq x} \Big( 1 - \frac{1}{\Vert \mfp \Vert} \Big)^{-1} \sim e^\gamma \alpha_K \log x,
\]
where the product is over all prime ideals $\mfp$ in $\bZ_K$. Here (and only here), $\gamma$ is the Euler-Mascheroni constant.
\end{proposition}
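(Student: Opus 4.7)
The plan is to follow the classical Mertens--Landau strategy, with the prime number theorem replaced by its analogue for ideals of $\bZ_K$. Let $\Lambda_K$ denote the ideal von Mangoldt function (equal to $\log \Vert \mfp \Vert$ on prime powers $\mfp^k$, and zero elsewhere). Landau's prime ideal theorem supplies
\[
\sum_{\Vert \mfa \Vert \le x} \Lambda_K(\mfa) = x + O\!\left(\frac{x}{(\log x)^c}\right)
\]
for some absolute $c > 0$. Partial summation, together with the trivial bound on the contribution of prime powers $\mfp^k$ ($k \ge 2$), yields the first-moment estimate
\[
\sum_{\Vert \mfp \Vert \le x} \frac{\log \Vert \mfp \Vert}{\Vert \mfp \Vert} = \log x + O(1).
\]
A second partial summation then produces
\[
\sum_{\Vert \mfp \Vert \le x} \frac{1}{\Vert \mfp \Vert} = \log_2 x + M_K + o(1)
\]
for some constant $M_K$ depending only on $K$.

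The next step is to identify $M_K$. Starting from the logarithm of the Euler product,
\[
\log \zeta_K(s) = \sum_{\mfp} \frac{1}{\Vert \mfp \Vert^s} + \sum_{\mfp} \sum_{k \ge 2} \frac{1}{k \Vert \mfp \Vert^{ks}},
\]
and the Laurent expansion $\zeta_K(s) = \alpha_K/(s - 1) + O(1)$ near $s = 1$, I would let $s \downarrow 1$ and apply an Abelian argument paralleling the classical Mertens computation to match divergent parts, obtaining
\[
M_K = \gamma + \log \alpha_K - \sum_{\mfp} \sum_{k \ge 2} \frac{1}{k \Vert \mfp \Vert^k}.
\]
To pass to the product form, expand
\[
-\log \prod_{\Vert \mfp \Vert \le x} \left( 1 - \frac{1}{\Vert \mfp \Vert} \right) = \sum_{\Vert \mfp \Vert \le x} \frac{1}{\Vert \mfp \Vert} + \sum_{\Vert \mfp \Vert \le x} \sum_{k \ge 2} \frac{1}{k \Vert \mfp \Vert^k}.
\]
The inner double sum converges absolutely even extended over all primes, so its tail beyond $x$ is $o(1)$. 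Combining with the previous two steps gives $\log_2 x + \gamma + \log \alpha_K + o(1)$, and exponentiating yields the claim.

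The main obstacle is the constant identification. Deriving $\log_2 x + O(1)$ is immediate once the ideal prime number theorem is in hand, but showing that the Mertens constant for $K$ is exactly $\gamma + \log \alpha_K$---with the Euler--Mascheroni constant appearing in the same form as over $\bQ$---requires a careful comparison of the real-variable behavior of $\log \zeta_K(s)$ as $s \downarrow 1$ against its Laurent expansion there. This is precisely the content of Theorem 2 of \cite{ros99}, which performs the computation uniformly for arbitrary number fields, so one could either appeal to it directly or rerun the Mertens--Hardy argument in the imaginary-quadratic setting.
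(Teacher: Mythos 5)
Your proposal is correct and lands on the same source the paper uses: the paper's ``proof'' is just the citation to Theorem~2 of \cite{ros99}, and you close by noting that the constant identification is precisely that theorem's content. The Mertens--Landau sketch you give (prime ideal theorem $\to$ first Mertens sum $\to$ reciprocal sum with constant $M_K = \gamma + \log\alpha_K - \sum_{\mfp}\sum_{k\ge2}\frac{1}{k\Vert\mfp\Vert^k}$ $\to$ exponentiate) is the standard derivation and is accurate, so it amounts to unpacking what the paper leaves to the reference.
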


Note also that the ``additive version'' of Mertens' theorem, i.e.,
\[
\sum_{\Vert \mfp \Vert \leq x} \frac{1}{\Vert \mfp \Vert} = \log_2 x + B_K + O_K\bigg(\frac{1}{\log x}\bigg)
\]
for some constant $B_K$, holds in this case as well; it appears as Lemma 2.4 in [Rosen].

Finally, we will make use of the following estimate for elementary symmetric functions \cite[p. 147, Lemma 13]{halroth83}.

\begin{lemma}\label{elementary}
Let $y_1, y_2, \ldots, y_M$ be $M$ non-negative real numbers. For each positive integer $d$ not exceeding $M$, let
\[
\sigma_d = \sum_{1 \leq k_1 < k_2 < \cdots < k_d \leq M} y_{k_1}y_{k_2}\cdots y_{k_d},
\]
so that $\sigma_d$ is the $d$th elementary symmetric function of the $y_k$'s. Then, for each $d$, we have
\[
\sigma_d \geq \frac{1}{d!} \sigma_1^d\Bigg(1 - \binom{d}{2}\frac{1}{\sigma_1^2}\sum_{k = 1}^M y_k^2\Bigg).
\]
\end{lemma}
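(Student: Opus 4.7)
The plan is to compare $\sigma_1^d$ with $d!\sigma_d$ directly. Expanding $\sigma_1^d = (y_1 + \cdots + y_M)^d$ as a sum over all functions $f \colon \{1,\ldots,d\} \to \{1,\ldots,M\}$ of the monomials $y_{f(1)}\cdots y_{f(d)}$, the injective $f$'s are exactly the orderings of $d$-element subsets of $\{1,\ldots,M\}$, so their contribution is precisely $d!\sigma_d$. Consequently
\[
d!\sigma_d = \sigma_1^d - \sum_{f \text{ not injective}} y_{f(1)} \cdots y_{f(d)},
\]
and the task reduces to producing a suitable upper bound on the non-injective sum.

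For that, I would apply a union bound over the $\binom{d}{2}$ candidate colliding pairs $\{i,j\} \subset \{1,\ldots,d\}$. Every non-injective $f$ has some pair of indices $i \neq j$ with $f(i) = f(j)$, and for each fixed such pair the sum of $y_{f(1)} \cdots y_{f(d)}$ over all such $f$ factors as $\big(\sum_k y_k^2\big) \cdot \sigma_1^{d-2}$: the $y_k^2$ coming from the two coinciding positions, and the remaining $d-2$ positions ranging freely over $\{1,\ldots,M\}$. Summing over pairs gives
\[
\sum_{f \text{ not injective}} y_{f(1)} \cdots y_{f(d)} \leq \binom{d}{2} \sigma_1^{d-2} \sum_{k=1}^{M} y_k^2,
\]
and substituting this bound into the previous display and dividing by $d!$ yields the lemma after factoring $\sigma_1^d/d!$ out of the right-hand side.

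There is no substantial obstacle here; the argument is purely combinatorial. The only point requiring even minor care is that distinct colliding pairs may coexist in a single non-injective $f$, so the union bound overcounts; however, since we only need an inequality and all $y_k \geq 0$, the overcount preserves the sign we need, namely a lower bound on $\sigma_d$. Alternative presentations (e.g. an induction on $d$, or Newton's identities relating $\sigma_d$ to the power sums $p_1, p_2$) would also work, but the direct multinomial-expansion-plus-union-bound route above seems the cleanest and most transparent way to arrive at the exact constant $\binom{d}{2}$ in the claim.
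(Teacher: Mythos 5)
Your argument is correct. The paper itself gives no proof of this lemma---it is imported verbatim from Halberstam and Roth's \emph{Sequences} (p.\ 147, Lemma 13)---so there is no in-paper proof to compare against, but your combinatorial derivation is sound and is essentially the standard one: expand $\sigma_1^d = \sum_f y_{f(1)}\cdots y_{f(d)}$ over all maps $f\colon\{1,\dots,d\}\to\{1,\dots,M\}$, observe that the injective maps contribute exactly $d!\,\sigma_d$, and bound the non-injective remainder by a union bound over the $\binom{d}{2}$ possible colliding pairs, each contributing $\sigma_1^{d-2}\sum_k y_k^2$. The non-negativity of the $y_k$ is exactly what makes the union-bound overcount harmless, as you note. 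The one point you leave implicit is the (trivial) degenerate case $\sigma_1 = 0$, where the stated inequality has a division by zero; there all $y_k = 0$ and $\sigma_d = 0$, so the claim is vacuous or can be read as $\sigma_d \geq 0$. No gap.
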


%

\section{An upper bound}

\begin{theorem}\label{upperbound}
Let $E$ be the elliptic curve $E : y^2 = x^3 - x$ and fix $\gamma > 1$. Then
\[
\#\{p \leq x : \omega(\efp) > \gamma \log_2 x\} \ll_\gamma \frac{x(\log_2 x)^5}{(\log x)^{2 + \gamma\log\gamma - \gamma}}.
\]
The same statement is true if instead $0 < \gamma < 1$ and the strict inequality is reversed on the left-hand side.
\end{theorem}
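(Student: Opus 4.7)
The plan is to pass from rational primes $p$ to primary Gaussian primes $\pi \in \bZ[i]$ via the CM structure of $E$, and then estimate a moment generating function $\sum z^{\omega(\pi - 1)}$ using the Bombieri--Vinogradov analogue (Proposition \ref{bv}) together with Mertens' theorem in $\bZ[i]$ (Proposition \ref{mertens}). For a prime $p \equiv 1 \pmod 4$ of good ordinary reduction, write $p = \pi\bar\pi$ with $\pi$ the primary Gaussian prime above $p$; then $\efp = (\pi-1)(\bar\pi-1) = \Vert \pi - 1 \Vert$. Every rational prime $q$ dividing $\Vert \pi - 1 \Vert$ lies under some Gaussian prime dividing $\pi - 1$, and distinct $q$'s produce distinct such Gaussian primes, so $\omega(\efp) \leq \omega_{\bZ[i]}(\pi - 1)$, where $\omega_{\bZ[i]}$ counts distinct prime-ideal divisors in $\bZ[i]$. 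The supersingular primes $p \equiv 3 \pmod 4$ satisfy $\efp = p+1$, and the count $\#\{p \leq x : \omega(p+1) > \gamma\log_2 x\}$ admits the same upper bound via the classical Erd\H{o}s--Delange argument for shifted primes applied with the usual Bombieri--Vinogradov theorem over $\bQ$.

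For the split contribution, set $T(x) := \#\{\pi \text{ primary},\, \Vert\pi\Vert \leq x : \omega_{\bZ[i]}(\pi - 1) > \gamma\log_2 x\}$. For any $z > 1$, Markov's inequality yields $T(x) \leq z^{-\gamma\log_2 x} \sum_\pi z^{\omega(\pi - 1)}$. Expanding $z^{\omega(\alpha)} = \sum_{\mathfrak{d} \mid \alpha,\, \mathfrak{d}\text{ sqfree}} (z - 1)^{\omega(\mathfrak{d})}$ and swapping summations,
\[
\sum_{\pi} z^{\omega(\pi - 1)} = \sum_{\mathfrak{d}\text{ sqfree}} (z - 1)^{\omega(\mathfrak{d})} \cdot \#\{\pi : \Vert\pi\Vert \leq x,\ \pi \equiv 1 \pmod{\mathfrak{d}}\}.
\]
For $\Vert\mathfrak{d}\Vert \leq x^{1/2}(\log x)^{-B}$, the inner count is $w_K \Li(x)/\Phi(\mathfrak{d})$ up to an acceptable cumulative error by Proposition \ref{bv}; for larger $\mathfrak{d}$, one truncates to $\omega(\mathfrak{d}) \leq C \log_2 x$ (contributions from very many prime factors being negligible) and uses a Brun--Titchmarsh-type upper bound $\ll x/(\Phi(\mathfrak{d}) \log(x/\Vert\mathfrak{d}\Vert))$ obtained via Theorem \ref{brun}. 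Proposition \ref{mertens} then delivers
\[
\sum_{\mathfrak{d}\text{ sqfree}} \frac{(z-1)^{\omega(\mathfrak{d})}}{\Phi(\mathfrak{d})} \ll \prod_{\Vert\mathfrak{p}\Vert \leq x} \Bigl(1 + \frac{z-1}{\Phi(\mathfrak{p})}\Bigr) \ll_z (\log x)^{z - 1},
\]
and combining gives $\sum_\pi z^{\omega(\pi - 1)} \ll_z x (\log x)^{z - 2}$.

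The exponent $z - 2 - \gamma \log z$ is minimized at $z = \gamma$, producing $T(x) \ll x(\log x)^{\gamma - 2 - \gamma \log \gamma} = x/(\log x)^{2 + \gamma \log \gamma - \gamma}$; the $(\log_2 x)^5$ factor in the statement will absorb constants from Mertens, from the $\omega(\mathfrak{d})$-truncation, and from the secondary errors in the sieve step. The case $0 < \gamma < 1$ is handled identically with $z < 1$ and the reversed Markov inequality, the optimum again occurring at $z = \gamma$. The main technical obstacle I anticipate is uniform control of the divisor sum in the range $\Vert\mathfrak{d}\Vert > x^{1/2 - o(1)}$, which lies beyond the reach of Proposition \ref{bv}: here one must apply Theorem \ref{brun} to the sequence $\{\pi - 1 : \pi \text{ primary},\, \Vert\pi\Vert \leq x\}$ to obtain a sieve upper bound on $\#\{\pi \equiv 1 \pmod{\mathfrak{d}}\}$ uniform in $\mathfrak{d}$. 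Carrying out this step with the required uniformity, and verifying that the $\omega(\mathfrak{d}) \leq C\log_2 x$ truncation loses only a polylogarithmic factor via Lemma \ref{elementary}, is where the polylogarithmic slack $(\log_2 x)^5$ enters.
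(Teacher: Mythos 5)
Your approach is genuinely different from the paper's. The paper factors off the largest prime factor of $\efp$ (Lemmas \ref{discard}--\ref{efpsafe}), expresses the count $N_k$ with $\omega(\efp)=k$ as a sum over $a \leq x^{1-1/6\log_2 x}$ with $\omega(a)=k-1$, applies Brun's sieve (Theorem \ref{brun}) directly to the shifted Gaussian primes, and then sums on $k > \gamma\log_2 x$. You instead run a Rankin/Chernoff argument: $T(x) \leq z^{-\gamma\log_2 x}\sum_\pi z^{\omega(\pi-1)}$, expand via the multiplicative identity, and optimize at $z=\gamma$. This is a valid and in some ways cleaner path to the exponent $2+\gamma\log\gamma-\gamma$; it replaces the careful $k$-by-$k$ bookkeeping with a single moment estimate.

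However, there are two genuine gaps. The more serious one is the case $0<\gamma<1$. Your reduction passes from $\omega(\efp)$ to $\omega_{\bZ[i]}(\pi-1)$ via the inequality $\omega(\efp)\leq\omega_{\bZ[i]}(\pi-1)$, which is the correct direction for the upper tail: if $\omega(\efp) > \gamma\log_2 x$ then $\omega_{\bZ[i]}(\pi-1) > \gamma\log_2 x$, so bounding the latter set suffices. For the lower tail this inequality points the wrong way: $\omega(\efp) < \gamma\log_2 x$ does \emph{not} force $\omega_{\bZ[i]}(\pi-1) < \gamma\log_2 x$, since a split rational prime $q\equiv 1\pmod 4$ can divide $\efp$ once while both $\mfq$ and $\bar\mfq$ divide $\pi-1$, contributing $2$ to the Gaussian count. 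In the extreme, $\omega_{\bZ[i]}(\pi-1)$ could be roughly twice $\omega(\efp)$, so the set you bound in the reversed-Markov step is strictly smaller than the set you must control. Saying the lower tail ``is handled identically with $z<1$'' is therefore not correct as written; you would either have to work with $z^{\omega(\efp)}$ directly (expanding over rational squarefree $d\mid\efp$ and translating each divisibility into Gaussian conditions via something like the paper's sets $S_d$), or separately show that the event $\omega_{\bZ[i]}(\pi-1) - \omega(\efp) \gg \log_2 x$ is rare enough.

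The second gap is the one you flag, and your proposed remedy is insufficient as stated. In the range $\Vert\mathfrak d\Vert \in [x^{1/2}(\log x)^{-B},\, 4x]$, the Brun--Titchmarsh bound $\ll x/(\Phi(\mathfrak d)\log(x/\Vert\mathfrak d\Vert))$ loses its saving as $\Vert\mathfrak d\Vert\to x$, and the weighted divisor sum $\sum_{\Vert\mathfrak d\Vert\in[y,2y]}(z-1)^{\omega(\mathfrak d)}/\Phi(\mathfrak d)$ does not decay in $y$; summing over dyadic blocks gives an extra factor of roughly $\log_2 x$ on top of the main term, which the $(\log_2 x)^5$ slack can absorb, but only after one first shows there is \emph{no} contribution from $\Vert\mathfrak d\Vert$ within a bounded factor of $\Vert\pi-1\Vert$. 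The truncation $\omega(\mathfrak d)\leq C\log_2 x$ also has a circularity: you are trying to prove that $\omega(\pi-1)$ is rarely large, yet you need that very fact to justify discarding divisors $\mathfrak d$ with many prime factors. The paper's device of first removing the $p$ for which $P^+(\efp)\leq x^{1/6\log_2 x}$ or $P^+(\efp)^2\mid\efp$ (Lemma \ref{discard}), then factoring $\efp = a\cdot P^+(\efp)$ so that the remaining modulus $a\leq x^{1-1/6\log_2 x}$ is automatically in sieve range, is precisely the maneuver that closes both holes at once, and some version of it will be needed to make your argument rigorous.
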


Before proving Theorem \ref{upperbound}, we refer to \cite[Table 2]{ju08} for the following useful fact concerning the numbers $\efp$: For primes $p \leq x$ with $p \equiv 1 \pmod 4$, we have
\begin{align}\label{efpnorm}
\efp = p + 1 - (\pi + \ol{\pi}) = (\pi - 1)\ol{(\pi - 1)},
\end{align}
where $\pi \in \bZ[i]$ is chosen so that $p = \pi\ol{\pi}$ and $\pi \equiv 1 \pmod{(1 + i)^3}$. (Such $\pi$ are sometimes called \emph{primary}.) This determines $\pi$ completely up to conjugation.

We begin the proof of Theorem \ref{upperbound} with the following lemma, which will allow us to disregard certain problematic primes $p$.

\begin{lemma}\label{discard}
Let $x \geq 3$ and let $P(n)$ denote the largest prime factor of $n$. Let $\cX$ denote the set of $n \leq x$ for which either of the following properties fail: 
\begin{itemize}
	\item[(i)] $P(n) > x^{1/6\log_2 x}$
	\item[(ii)] $P(n)^2 \nmid n$.
\end{itemize}
Then, for any $A > 0$, the size of $\cX$ is $O(x/(\log x)^A)$.
\end{lemma}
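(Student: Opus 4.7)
The plan is to split $\cX$ according to which of the two conditions fails. Setting $y := x^{1/(6\log_2 x)}$, I would write $\cX \subseteq \cX_1 \cup \cX_2$, where
\[
\cX_1 = \{n \leq x : P(n) \leq y\}, \qquad \cX_2 = \{n \leq x : p^2 \mid n \text{ for some prime } p > y\}.
\]
Any $n$ for which (i) fails lies in $\cX_1$ by definition; any $n$ for which (ii) fails either has $P(n) \leq y$ (hence lies in $\cX_1$) or has $P(n) > y$ with $P(n)^2 \mid n$ (hence lies in $\cX_2$). So it suffices to bound $|\cX_1|$ and $|\cX_2|$ separately.

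For $\cX_1$ I would invoke the standard Dickman--de Bruijn estimate for counts of smooth integers: with $u := \log x/\log y = 6\log_2 x$, one has $|\cX_1| = \Psi(x,y) \ll x\,\rho(u)$, and $\rho(u) = u^{-u(1+o(1))}$ in this range of $u$. This yields
\[
|\cX_1| \ll x\exp\!\bigl(-u\log u\,(1+o(1))\bigr) = x\exp\!\bigl(-6(\log_2 x)(\log_3 x)(1+o(1))\bigr),
\]
which is much smaller than $x/(\log x)^A = x\exp(-A\log_2 x)$ for every fixed $A > 0$, once $x$ is sufficiently large.

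For $\cX_2$ I would argue directly: each prime $p > y$ contributes at most $\lfloor x/p^2\rfloor$ multiples of $p^2$ up to $x$, so
\[
|\cX_2| \leq \sum_{p > y} \frac{x}{p^2} \ll \frac{x}{y \log y} = \frac{x}{\log y}\exp\!\Bigl(-\frac{\log x}{6\log_2 x}\Bigr).
\]
Since $(\log x)/(6\log_2 x)$ dominates $A\log_2 x$ for any fixed $A$ once $x$ is large, this is again $O(x/(\log x)^A)$. Combining the two bounds gives the lemma.

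I do not expect any serious obstacle; the statement is essentially a bookkeeping exercise combining a smooth-number bound with a trivial squarefull bound. The only point requiring care is that the naive estimate $\Psi(x,y) \leq x/y^{\epsilon}$ is far too weak to handle $\cX_1$ — one really does need a Dickman-type (or Hildebrand--Tenenbaum) estimate, which is applicable here because $y = x^{c/\log_2 x}$ comfortably exceeds the ranges $y \geq (\log x)^{1+\epsilon}$ in which such estimates are known to hold uniformly.
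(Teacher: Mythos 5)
Your proposal is correct and takes essentially the same route as the paper: the same decomposition into a smooth part and a squarefull-at-the-top part, a de Bruijn-type bound $\Psi(x,y)\le x/u^{u+o(u)}$ (stated there as Proposition \ref{smooth}) for the first, and the trivial $\sum_{p>y} x/p^2$ bound for the second. The only cosmetic difference is that you phrase the smooth-number input via the Dickman $\rho$ function while the paper cites de Bruijn's $u^{-u+o(u)}$ form directly; these are equivalent in this range.
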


The following upper bound estimate of de Bruijn \cite[Theorem 2]{db66} will be useful in proving the above lemma.

\begin{proposition}\label{smooth}
Let $x \geq y \geq 2$ satisfy $(\log x)^2 \leq y \leq x$. Whenever $u := \frac{\log x}{\log y} \to \infty$, we have
\[
\Psi(x, y) \leq x/u^{u + o(u)}.
\]
\end{proposition}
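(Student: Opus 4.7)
The set $\cX$ naturally decomposes as $\cX_1 \cup \cX_2$, where $\cX_1 := \{n \leq x : P(n) \leq y\}$ (integers violating (i)) and $\cX_2 := \{n \leq x : P(n)^2 \mid n\}$ (integers violating (ii)), with the threshold $y := x^{1/(6\log_2 x)}$. The plan is to bound each piece separately by $O(x/(\log x)^A)$.

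\smallskip
For $\cX_1$, I would simply apply Proposition \ref{smooth} since $|\cX_1| = \Psi(x, y)$. First I would verify the hypothesis $(\log x)^2 \leq y \leq x$: one has $\log y = (\log x)/(6\log_2 x)$, and the inequality $\log y \geq 2\log_2 x$ reduces to $\log x \geq 12(\log_2 x)^2$, which holds for all sufficiently large $x$. Next, $u = (\log x)/\log y = 6\log_2 x \to \infty$, so de Bruijn's estimate yields
\[
\Psi(x, y) \leq \frac{x}{u^{u + o(u)}} = \frac{x}{(6\log_2 x)^{(6+o(1))\log_2 x}}.
\]
Taking logarithms, $\log(u^u) = 6(\log_2 x)\log(6\log_2 x) \gg (\log_2 x)(\log_3 x)$, which dwarfs $A\log_2 x = \log((\log x)^A)$ for any fixed $A > 0$ once $x$ is large. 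Hence $|\cX_1| = O_A(x/(\log x)^A)$.

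\smallskip
For $\cX_2$, I would first absorb the portion with $P(n) \leq y$ into $\cX_1$. For the remaining portion, if $P(n) > y$ and $P(n)^2 \mid n$, then there exists a rational prime $p > y$ with $p^2 \mid n$, so a crude union bound gives
\[
|\cX_2 \setminus \cX_1| \leq \sum_{p > y} \left\lfloor \frac{x}{p^2} \right\rfloor \leq x \sum_{m > y} \frac{1}{m^2} \ll \frac{x}{y}.
\]
Since $y = x^{1/(6\log_2 x)}$, we have $x/y = \exp\!\bigl(\log x - (\log x)/(6\log_2 x)\bigr)$; equivalently $\log(x/y) = \log x - (\log x)/(6\log_2 x)$, and the condition $x/y \leq x/(\log x)^A$ reduces to $\log y \geq A\log_2 x$, i.e. $\log x \geq 6A(\log_2 x)^2$, which again holds for $x$ large. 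So this contribution is also $O_A(x/(\log x)^A)$.

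\smallskip
Combining the two bounds gives $|\cX| = O_A(x/(\log x)^A)$ for any $A > 0$, as claimed. There is no real obstacle; the only subtlety is matching the choice $y = x^{1/(6\log_2 x)}$ to the hypotheses of Proposition \ref{smooth} and checking that $u^u$ and $y$ each grow faster than any fixed power of $\log x$.
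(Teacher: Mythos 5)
Your proposal does not prove the statement at hand. The statement is Proposition \ref{smooth} itself, de Bruijn's bound $\Psi(x,y) \leq x/u^{u+o(u)}$ for $(\log x)^2 \leq y \leq x$ and $u = \log x/\log y \to \infty$. What you have written is a proof of Lemma \ref{discard} (the bound $\#\cX = O_A(x/(\log x)^A)$), and in the course of it you invoke Proposition \ref{smooth} as a known input (``I would simply apply Proposition \ref{smooth}''). As an argument for the proposition this is circular: the estimate for the count of $y$-smooth integers is exactly what must be supplied, and nothing in your write-up engages with it. For the record, the paper does not prove this proposition either --- it is quoted directly as Theorem 2 of de Bruijn's paper \cite{db66} --- so the expected content here is an actual smooth-number argument, not a deduction from it. (Viewed instead as a proof of Lemma \ref{discard}, your decomposition into the smooth part and the part with $P(n)>y$, $P(n)^2 \mid n$, handled by $\sum_{p>y} x/p^2 \ll x/y$, is essentially the paper's own proof; but that is a different statement.)

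If you want to prove the proposition, the standard route in this range is Rankin's method: for any $0 < \sigma \leq 1$,
\[
\Psi(x,y) \leq \sum_{\substack{n \leq x \\ P(n) \leq y}} \Big(\frac{x}{n}\Big)^{\sigma} \leq x^{\sigma} \prod_{p \leq y} \big(1 - p^{-\sigma}\big)^{-1},
\]
and choosing $\sigma = 1 - \frac{\log(u\log u)}{\log y}$ gives $x^{\sigma} = x\exp(-u\log(u\log u))$, while the hypothesis $y \geq (\log x)^2$ keeps $\sigma$ bounded away from $0$ and, via Mertens-type estimates for $\sum_{p \leq y} p^{-\sigma}$, bounds the Euler product by $\exp(O(u) + O(\log_2 y))$. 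Altogether $\Psi(x,y) \leq x\exp(-(1+o(1))\,u\log u) = x/u^{u+o(u)}$ as $u \to \infty$, which is the claim; alternatively one can follow de Bruijn's or Buchstab's iteration arguments. Some version of this analysis (or an explicit citation treated as such) is the missing content of your proposal.
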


\begin{proof}[Proof of Lemma \ref{discard}.]
If $n \in \cX$, then either (a) $P(n) \leq x^{1/6\log_2 x}$ or (b) $P(n) > x^{1/6\log_2 x}$ and $P(n)^2 \mid n$. By Proposition \ref{smooth}, the number of $n \leq x$ for which (a) holds is $O(x/(\log x)^A)$ for any $A > 0$, noting that $(\log x)^A \ll (\log x)^{\log_3 x} = (\log_2 x)^{\log_2 x}$. The number of $n \leq x$ for which (b) holds is 
\[
\ll x\sum_{p > x^{1/6\log_2 x}} p^{-2} \ll x\exp(-\log x/6 \log_2x),
\]
and this is also $O(x/(\log x)^A)$.
\end{proof}

We would like to use Lemma \ref{discard} to say that a negligible amount of the numbers $\efp$, for $p \leq x$, belong to $\cX$. The following lemma allows us to do so.

\begin{lemma}\label{efpsafe}
The number of $p \leq x$ with $\efp \in \cX$ is $O(x/(\log x)^B)$, for any $B > 0$.
\end{lemma}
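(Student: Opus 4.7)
The plan is to transfer Lemma~\ref{discard}'s bound on $|\cX|$ into the desired bound on $\#\{p \leq x : \efp \in \cX\}$ by controlling, for each $n \in \cX$, the number of primes $p \leq x$ with $\efp = n$. I will split primes according to $p \bmod 4$.

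For $p \equiv 3 \pmod 4$, the curve $E$ has supersingular reduction, so $\efp = p+1$ and the map $p \mapsto \efp$ is injective. Hence the number of such primes with $\efp \in \cX$ is trivially at most $|\cX|$, which is $O(x/(\log x)^A)$ for any $A > 0$ by Lemma~\ref{discard}.

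For $p \equiv 1 \pmod 4$, identity~\eqref{efpnorm} writes $\efp$ as $\Vert \pi - 1 \Vert$, where $\pi$ is the primary Gaussian prime of norm $p$. Any prime $p$ with $\efp = n$ therefore arises from a Gaussian integer $\alpha = \pi - 1$ of norm $n$, and the number of such $\alpha$ is at most $r_2(n) := \#\{\alpha \in \bZ[i] : \Vert \alpha \Vert = n\}$. Consequently
\[
\#\{p \leq x,\, p \equiv 1 \pmod 4 : \efp \in \cX\} \;\leq\; \sum_{n \in \cX} r_2(n),
\]
and Cauchy--Schwarz combined with the classical second-moment estimate $\sum_{n \leq x} r_2(n)^2 \ll x \log x$ (Ramanujan) yields
\[
\sum_{n \in \cX} r_2(n) \;\leq\; |\cX|^{1/2}\!\left(\sum_{n \leq x} r_2(n)^2\right)^{\!1/2} \;\ll\; \frac{x}{(\log x)^{(A-1)/2}}.
\]

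Combining both residue classes and taking $A = 2B+1$ in Lemma~\ref{discard} yields the claimed $O(x/(\log x)^B)$ bound. The subtle point worth emphasizing is that a trivial pointwise bound of the form $r_2(n) \ll n^{\epsilon}$ would cost an unrecoverable factor of $x^{\epsilon}$ and prevent the conclusion; it is essential to use Cauchy--Schwarz against the averaged second-moment bound in order to secure a polylogarithmic saving. The underlying input $\#\{p : \efp = n\} \leq r_2(n) + 1$ follows immediately from~\eqref{efpnorm}, so no deeper sieve or analytic machinery beyond Lemma~\ref{discard} itself is required.
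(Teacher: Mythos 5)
Your proof is correct and follows essentially the same route as the paper: bound the multiplicity $\#\{p : \efp = n\}$ via the number of Gaussian integers of norm $n$, then combine Lemma~\ref{discard} with Cauchy--Schwarz against a second-moment estimate. The two cosmetic differences are (i) you treat the supersingular primes $p \equiv 3 \pmod 4$ explicitly, for which $\efp = p+1$ is injective and the bound is immediate from $|\cX|$ alone --- the paper leaves this case implicit, since \eqref{efpnorm} is stated only for $p \equiv 1 \pmod 4$; and (ii) you invoke $\sum_{n \le x} r_2(n)^2 \ll x\log x$ directly, whereas the paper first bounds $r_2(n) \le 4\tau(n)$ and uses $\sum_{n \le x}\tau(n)^2 \ll x\log^3 x$; both lose only a fixed power of $\log x$, which is irrelevant since $A$ in Lemma~\ref{discard} is arbitrary.
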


\begin{proof}
Suppose $\efp = b \in \cX$. Then, by (\ref{efpnorm}), $b = \Vert \pi - 1 \Vert$, where $\pi \in \bZ[i]$ is a Gaussian prime lying above $p$. Thus, the number of $p \leq x$ with $\efp = b$ is bounded from above by the number of Gaussian integers with norm $b$, which, by \cite[Theorem 278]{hw00}, is $4\sum_{d \mid b} \chi(d)$, where $\chi$ is the nontrivial character modulo 4. Now, using the Cauchy-Schwarz inequality and Lemma \ref{discard},
\begin{align*}
4 \sum_{b \in \cX} \sum_{d \mid b} \chi(d) \leq 4 \sum_{b \in \cX} \tau(b) &\leq 4\Big(\sum_{b \in \cX} 1\Big)^{1/2} \Big(\sum_{b \in \cX} \tau(b)^2 \Big)^{1/2} \\
 &\ll \Big(\frac{x}{(\log x)^{A}}\Big)^{1/2} \Big( x\log^3 x \Big)^{1/2} = \frac{x}{(\log x)^{A/2 - 3/2}}.
\end{align*}
Since $A > 0$ can be chosen arbitrarily, this completes the proof.
\end{proof}

For $k$ a nonnegative integer, define $N_k$ to be the number of primes $p \leq x$ of good ordinary reduction for $E$ such that $\efp$ possesses properties $(i)$ and $(ii)$ from the above lemma and such that $\omega(\efp) = k$. Then, in the case when $\gamma > 1$,
\[
\#\{p \leq x : \omega(\efp) > \gamma \log\log x\} = \sum_{k > \gamma \log_2 x} N_k + O\Big(\frac{x}{(\log x)^A}\Big)
\]
for any $A > 0$. Our task is now to bound $N_k$ from above in terms of $k$. Evaluating the sum on $k$ then produces the desired upper bound.

It is clear that
\begin{align}\label{nkub}
N_k \leq \sum_{\substack{a \leq x^{1 - 1/6\log_2 x} \\ \omega(a) = k-1}} \sum_{\substack{p \leq x \\ p \equiv 1 \pmod 4 \\a \mid \efp \\ \efp/a \text{ prime}}} 1.
\end{align}

To handle the inner sum, we need information on the integer divisors of $\efp$, where $p \leq x$ and $p \equiv 1 \pmod 4$. We employ the analysis of Pollack in his proof of \cite[Theorem 1.1]{poltitec}, which we restate here for completeness.

By (\ref{efpnorm}), we have $a \mid \efp$ if and only if $a \mid (\pi - 1)\ol{(\pi - 1)} = \Vert \pi - 1\Vert$. With this in mind, we have
\[
\sum_{\substack{a \leq x^{1 - 1/6\log\log x} \\ \omega(a) = k-1}} \sum_{\substack{p \leq x \\ p \equiv 1 \pmod 4 \\a \mid \efp \\ \efp/a \text{ prime}}} 1 = \frac{1}{2} \sum_{\substack{a \leq x^{1 - 1/6\log\log x} \\ \omega(a) = k-1}} \sideset{}{'}\sum_{\substack{\pi \, : \, \Vert\pi\Vert \leq x \\ \pi \equiv 1 \pmod{(1 + i)^3} \\ a \mid \Vert\pi - 1\Vert \\ \Vert\pi - 1\Vert/a \text{ prime}}} 1,
\]
where the $'$ on the sum indicates a restriction to primes $\pi$ lying over rational primes $p \equiv 1 \pmod 4$.

\subsection{Divisors of shifted Gaussian primes.} The conditions on the primed sum above can be reformulated purely in terms of Gaussian integers.

\begin{definition}\label{setsa} For a given integer $a \in \bN$, write $a = \prod_q q^{v_q}$, with each $q$ prime. For each $q \mid a$ with $q \equiv 1 \pmod 4$, write $q = \pi_q\ol{\pi}_q$. Define a set $S_a$ which consists of all products $\alpha$ of the form
\[
\alpha = (1 + i)^{v_2} \prod_{\substack{q \mid a \\ q \equiv 3 \pmod 4}} q^{\lceil v_q/2 \rceil} \prod_{\substack{q \mid a \\ q \equiv 1 \pmod 4}} \alpha_q,
\]
where $\alpha_q \in \{\pi_q^i \ol{\pi}_q^{v_q - i} : i = 0, 1, \ldots, v_q\}$.
\end{definition}

Notice that the condition $a \mid \Vert \pi - 1 \Vert$ is equivalent to $\pi - 1$ being divisible by some element of the set $S_a$. We can therefore write
\begin{align}\label{alphasum}
\sum_{\substack{a \leq x^{1 - 1/6\log\log x} \\ \omega(a) = k-1}} \sum_{\substack{p \leq x \\ p \equiv 1 \pmod 4 \\a \mid \efp \\ \efp/a \text{ prime}}} 1 \leq \frac{1}{2}\sum_{\substack{a \leq x^{1 - 1/6\log\log x} \\ \omega(a) = k-1}} \sum_{\alpha \in S_a} \sideset{}{'}\sum_{\substack{\pi \, : \, \Vert\pi\Vert \leq x \\ \pi \equiv 1 \pmod{(1 + i)^3} \\ \alpha \mid \pi - 1 \\ \Vert \pi - 1 \Vert /a  \text{ prime}}} 1.
\end{align}

Now, for any $\alpha \in S_a$, we have
\[
\alpha\ol{\alpha} = a\prod_{q \equiv 3 \pmod 4} q^{2\lceil v_q/2 \rceil - v_q}.
\]
Observe that
\begin{align*}
\frac{\Vert \pi - 1 \Vert}{a} = \frac{(\pi-1)(\ol{\pi - 1})}{\alpha\ol{\alpha}}\prod_{q \equiv 3 \pmod 4} q^{2\lceil v_q/2 \rceil - v_q}.
\end{align*}
Therefore, if $\frac{\Vert \pi - 1 \Vert}{a}$ is to be prime, the number $a$ must satisfy exactly one of the following properties:
\begin{itemize}
\item[1.] The number $a$ is divisible by exactly one prime $q \equiv 3 \pmod 4$ with $v_q$ an odd number, and $\alpha = u(\pi - 1)$ where $u \in \bZ[i]$ is a unit; or
\item[2.] All primes $q \equiv 3 \pmod 4$ which divide $a$ have $v_q$ even, and $(\pi - 1) / \alpha$ is a prime in $\bZ[i]$.
\end{itemize}

This splits the outer sum in (\ref{alphasum}) into two components.

\begin{lemma}\label{ubcase1}
We have
\[
\sideset{}{^\flat}\sum_{\substack{a \leq x^{1 - 1/6\log\log x} \\ \omega(a) = k-1}} \sum_{\alpha \in S_a} \sideset{}{'}\sum_{\substack{\pi \, : \, \Vert\pi\Vert \leq x \\ \pi \equiv 1 \pmod{(1 + i)^3} \\ (\pi - 1)/\alpha \in U}} 1 = O\bigg(\frac{x}{\log^A x}\bigg),
\]
where $U$ is the set of units in $\bZ[i]$ and the $\flat$ on the outer sum indicates a restriction to integers $a$ such that there is a unique prime power $q^{v_q} \Vert a$ with $q \equiv 3 \pmod 4$ and $v_q$ odd.
\end{lemma}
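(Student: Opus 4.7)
The crucial feature of Case 1 is that the condition $(\pi-1)/\alpha \in U$ is extremely rigid: for each fixed $\alpha \in \bZ[i]$, the equation $\pi = u\alpha + 1$ has at most $|U| = 4$ solutions $\pi$. Hence the innermost sum contributes at most $4$ for each pair $(a,\alpha)$, regardless of the primality or congruence conditions imposed on $\pi$. Modulo this observation, the lemma reduces to bounding a sum of $|S_a|$.

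Inspecting Definition \ref{setsa}, the factors at $q = 2$ and at primes $q \equiv 3 \pmod 4$ are uniquely determined, so
\[
|S_a| = \prod_{\substack{q \mid a \\ q \equiv 1 \pmod 4}} (v_q + 1) \;\leq\; \tau(a).
\]
Dropping both the condition $\omega(a) = k-1$ and the $\flat$ restriction (each of which only shrinks the sum) then yields
\[
\sideset{}{^\flat}\sum_{\substack{a \leq y \\ \omega(a) = k-1}} \sum_{\alpha \in S_a} \sideset{}{'}\sum_\pi 1 \;\leq\; 4\sum_{a \leq y} \tau(a) \;\ll\; y \log y,
\]
where $y := x^{1 - 1/(6\log_2 x)}$, using the classical estimate $\sum_{n \leq y} \tau(n) \ll y\log y$.

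To finish, writing $y = x \exp\bigl(-\log x / (6\log_2 x)\bigr)$, one has for any $A > 0$,
\[
\frac{y\log y}{x/(\log x)^A} \;\ll\; (\log x)^{A+1} \exp\!\Bigl(-\frac{\log x}{6\log_2 x}\Bigr) \;\longrightarrow\; 0
\]
as $x \to \infty$, since $\log x / (6\log_2 x)$ eventually dominates $(A+1)\log_2 x$. This gives the bound $O(x/(\log x)^A)$ for any $A$. There is no serious obstacle: the entire argument rests on the rigidity of Case 1 (which reduces the triple sum to an elementary divisor sum) together with the fact that the truncation $a \leq x^{1 - 1/(6\log_2 x)}$ costs more than any fixed power of $\log x$.
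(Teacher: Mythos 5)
Your proof is correct and follows essentially the same approach as the paper: bound the innermost sum by $4$ using the rigidity of the unit condition, bound $|S_a|$ by $\tau(a)$, and then apply the classical divisor-sum estimate, noting that the truncation $a \leq x^{1-1/(6\log_2 x)}$ beats any fixed power of $\log x$. The only difference is that you spell out the final comparison $y\log y \ll x/(\log x)^A$ in a bit more detail than the paper does.
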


\begin{proof}
If $\alpha = u(\pi - 1)$ for $u \in U$, then there are at most four choices for $\pi$, given $\alpha$. Thus
\[
\sideset{}{^\flat}\sum_{\substack{a \leq x^{1 - 1/6\log\log x} \\ \omega(a) = k-1}} \sum_{\alpha \in S_a} \sideset{}{'}\sum_{\substack{\pi \, : \, \Vert\pi\Vert \leq x \\ \pi \equiv 1 \pmod{(1 + i)^3} \\ \alpha = u(\pi - 1)}} 1 \leq 4 \sideset{}{^\flat}\sum_{\substack{a \leq x^{1 - 1/6\log\log x} \\ \omega(a) = k-1}} |S_a|.
\]
We have $|S_a| = \prod_{q \equiv 1 \pmod 4} (v_q + 1)$; this is bounded from above by the divisor function on $a$, which we denote $\tau(a)$. Therefore, the above is
\[
\ll \sum_{a \leq x^{1 - 1/6\log\log x}} \tau(a) \ll x^{1 - 1/6\log_2 x}(\log x),
\]
which is $O(x/\log^Ax)$ for any $A > 0$.
\end{proof}

The second case provides the main contribution to the sum.

\begin{lemma}\label{ubcase2}
Let $a \leq x^{1 - 1/6\log\log x}$ with $\omega(a) = k-1$ such that all primes $q \equiv 3 \pmod 4$ dividing $a$ have $v_q$ even. Let $\alpha \in S_a$. Then
\[
\sideset{}{'}\sum_{\substack{\pi \, : \, \Vert\pi\Vert \leq x \\ \pi \equiv 1 \pmod{(1 + i)^3} \\ \alpha \mid \pi - 1 \\ (\pi - 1)/\alpha \text{ prime}}} 1 \ll \frac{x(\log_2 x)^5}{\Vert \alpha \Vert(\log x)^2}
\]
uniformly over all $a$ as above and $\alpha \in S_a$.
\end{lemma}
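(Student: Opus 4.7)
The natural approach is a change of variables: set $\beta = (\pi - 1)/\alpha$, converting the sum into a count of Gaussian integers $\beta$ with $\Vert\beta\Vert \leq y := x/\Vert\alpha\Vert$, satisfying $\alpha\beta \equiv 0 \pmod{(1+i)^3}$, and such that both $\beta$ and $1 + \alpha\beta$ are Gaussian primes. This is a twin-prime-type problem in $\bZ[i]$, and the natural tool is Brun's sieve (Theorem \ref{brun}). Note that under the hypothesis of the lemma (Case 2), $\Vert\alpha\Vert = a$, so $y \geq x^{1/6\log_2 x}$.

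Take $\cA = \{\beta \in \bZ[i] : \Vert\beta\Vert \leq y,\ \alpha\beta \equiv 0 \pmod{(1+i)^3}\}$, with the approximation $X \asymp y$, and let $\cP$ consist of the prime ideals $\mfp \neq (1+i)$ of norm at most $z$, where $z$ is a parameter to be chosen. The relevant sieve density is $\delta(\mfp) = 2/\Vert\mfp\Vert$ if $\mfp \nmid \alpha$, and $\delta(\mfp) = 1/\Vert\mfp\Vert$ if $\mfp \mid \alpha$: these count residue classes of $\beta \pmod{\mfp}$ for which $\mfp \mid \beta(1 + \alpha\beta)$. Choosing an even $m$ of size $\asymp \log_3 x$ makes the tail term $X\sum_{\omega(\mfu) \geq m}\delta(\mfu)$ in Theorem \ref{brun} negligible. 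The remainder sum $\sum_{\omega(\mfu) \leq m}|r(\mfu)|$ reduces, via CRT over the prime ideals dividing $\mfu$, to a sum of Bombieri-Vinogradov discrepancies over moduli of norm at most $z^m$, which Proposition \ref{bv} handles provided $z^m \leq y^{1/2}(\log y)^{-B}$; this mandates $\log z \asymp (\log y)/\log_3 x$.

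The main term $X\prod_{\mfp \in \cP}(1 - \delta(\mfp))$ factors as
\[
y \prod_{\Vert\mfp\Vert \leq z}\left(1 - \frac{2}{\Vert\mfp\Vert}\right)\cdot \prod_{\substack{\mfp \mid \alpha\\ \Vert\mfp\Vert \leq z}} \frac{1 - 1/\Vert\mfp\Vert}{1 - 2/\Vert\mfp\Vert}.
\]
Two applications of Proposition \ref{mertens} give the first product $\asymp (\log z)^{-2}$. The correction factor is at most $\prod_{\mfp \mid \alpha}(1 + O(\Vert\mfp\Vert^{-1}))$; bounding $\sum_{\mfp \mid \alpha} \Vert\mfp\Vert^{-1} \leq 2\sum_{q \mid a} 1/q$ and noting that the right-hand sum is maximized, subject to $\omega(a) = k-1 \ll \log_2 x$, by taking $a$ to be the primorial of its first $\omega(a)$ primes (which gives $\sum_{q \mid a}1/q \ll \log_4 x$), the correction is $\leq (\log_3 x)^{O(1)}$. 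Combining these estimates and using $\log y \gg \log x/\log_2 x$,
\[
\text{main term} \ll \frac{y(\log_3 x)^{2+O(1)}}{(\log y)^2} \ll \frac{x(\log_2 x)^2(\log_3 x)^{O(1)}}{\Vert\alpha\Vert(\log x)^2} \ll \frac{x(\log_2 x)^5}{\Vert\alpha\Vert(\log x)^2},
\]
matching the claimed bound.

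The principal obstacle is establishing uniformity in $\alpha$: the sieve parameter $z$, the allowable level of distribution, and the correction factor all depend on $\alpha$, so one must verify the bound simultaneously for all $a \leq x^{1 - 1/6\log_2 x}$ and $\alpha \in S_a$. Additional care is needed to treat the prime $(1+i)$ separately, both because it is ramified and because it is already constrained via the condition $\alpha\beta \equiv 0 \pmod{(1+i)^3}$ built into $\cA$, and to bound the negligible contribution of Gaussian primes of norm $\leq z$ for which Brun's sieve gives no information.
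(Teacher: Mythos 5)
Your change of variables and overall "twin-prime sieve in $\bZ[i]$" framing matches the paper, but the two key technical moves you propose are inconsistent with each other and the Bombieri--Vinogradov route does not work in this setting.

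The density $\delta(\mfp) = 2/\Vert\mfp\Vert$ (or $1/\Vert\mfp\Vert$ for $\mfp\mid\alpha$) is correct only if you sieve the polynomial values $\beta(1+\alpha\beta)$ over \emph{all} Gaussian integers $\beta$ in a disk. In that case, $A_\mfu$ is a lattice-point count and the remainder $r(\mfu)$ is a lattice-point error of size $\ll \nu(\mfu)\sqrt{x/(\Vert\alpha\Vert\Vert\mfu\Vert)}$; it has nothing to do with primes in progressions, so Proposition \ref{bv} is irrelevant. Conversely, if you want a Bombieri--Vinogradov-type remainder, you must take $\cA$ to be the sequence of $(\pi-1)/\alpha$ with $\pi$ restricted to Gaussian primes, so that $A_\mfu$ counts primes $\pi\equiv 1\pmod{\alpha\mfu}$. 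But then the density is $1/\Phi(\mfp)$, not $2/\Vert\mfp\Vert$, giving only a single factor of $1/\log z$ (the second $\log$ has to come from the Chebyshev/prime-number bound, not the Euler product), so the accounting in your main term is off. You cannot mix the lattice density with the BV remainder.

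Even setting that aside, the BV route fails because the level of distribution is too small. To detect the congruence $\beta\equiv 0\pmod\mfu$ via prime counting, the modulus is $\alpha\mfu$, whose norm is $\Vert\alpha\Vert\cdot\Vert\mfu\Vert$; Proposition \ref{bv} requires this to be $\leq x^{1/2}(\log x)^{-B}$. But in this lemma $\Vert\alpha\Vert = a$ ranges up to $x^{1-1/6\log_2 x}$, which already exceeds $x^{1/2}$, so there is no room left for $\mfu$ at all. (This is precisely why the paper uses BV only in the lower bound, where the modulus $\mfs$ is forced to have $\Vert\mfs\Vert\leq x^{1/10}$, and switches to the elementary lattice-point remainder here.) Your choice $m\asymp\log_3 x$, $\log z\asymp(\log y)/\log_3 x$ compounds the problem: with the honest lattice-point remainder, $\sum_{\omega(\mfu)\leq m}|r(\mfu)|\ll\sqrt{x/\Vert\alpha\Vert}\cdot\pi_K(z)^m$, and $\pi_K(z)^m$ is then roughly $y$, which swamps the main term. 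The paper takes $z = x^{1/200(\log_2 x)^2}$ (so $\log z$ much smaller than $\log x/\log_2 x$), keeping $\pi_K(z)^m$ at size $x^{O(1/\log_2 x)}$, which is genuinely negligible against $\sqrt{x/\Vert\alpha\Vert}$ under the constraint $\Vert\alpha\Vert\leq x^{1-1/6\log_2 x}$. Finally, the claim $\sum_{\mfp\mid\alpha}1/\Vert\mfp\Vert\ll\log_4 x$ implicitly assumes $\omega(a)\ll\log_2 x$, which is not part of the lemma's hypotheses; the paper instead applies the clean worst-case bound $\Phi(\alpha)\gg\Vert\alpha\Vert/\log_2\Vert\alpha\Vert$, which holds uniformly and costs only one extra factor of $\log_2 x$.
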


\begin{proof}

If $\pi \equiv 1 \pmod \alpha$, then $\pi = 1 + \alpha\beta$ for some $\beta \subset \bZ[i]$. Thus $\beta = \frac{\pi - 1}{\alpha}$, and so $\Vert \beta \Vert \leq \frac{2x}{\Vert \alpha \Vert}$. Let $\cA$ denote the sequence of elements in $\bZ[i]$ given by
\[
\Big\{ \beta(1 + \alpha\beta) : \Vert \beta \Vert \leq \frac{2x}{\Vert \alpha \Vert} \Big\}.
\]
Define $\cP = \{\mfp \subset \bZ[i] : \Vert \mfp \Vert \leq z\}$ where $z$ is a parameter to be chosen later. Then, in the notation of Theorem \ref{brun},
\[
\sideset{}{'}\sum_{\substack{\pi \, : \, \Vert\pi\Vert \leq x \\ \pi \equiv 1 \pmod{(1 + i)^3} \\ \alpha \mid \pi - 1 \\ (\pi - 1)/\alpha \text{ prime}}} 1 \leq S(\cA, \cP) + O(z).
\]
Here, the $O(z)$ term comes from those $\pi \in \bZ[i]$ such that both $\pi$ and $(\pi - 1)/\alpha$ are primes of norm less than $z$.

For $\mfu \subset \bZ[i]$, write $A_\mfu = \#\{a \in \cA : a \equiv 0 \pmod \mfu\}$. An element $\mfa \in \cA$ is counted by $A_\mfu$ if and only if a generator of $\mfu$ divides $a$. Thus, by familiar estimates on the number of integer lattice points contained in a circle, $A_\mfu$ satisfies the equation
\[
A_\mfu = \frac{2\pi x}{\Vert\alpha\Vert} \frac{\nu(\mfu)}{\Vert \mfu \Vert} + O\Big(\nu(\mfu)\frac{\sqrt{x}}{(\Vert \alpha\Vert\Vert\mfu \Vert)^{1/2}}\Big),
\]
where 
\[
\nu(\mfu) = \#\{\beta \pmod \mfu : \beta(1 + \alpha\beta) \equiv 0 \pmod \mfu\}.
\]
We apply Theorem \ref{brun} with
\[
X = \frac{2\pi x}{\Vert\alpha\Vert} \,\,\,\,\,\, \text{and} \,\,\,\,\,\, \delta(\mfu) = \frac{\nu(\mfu)}{\Vert\mfu\Vert}.
\]
With these choices, we have
\[
r(\mfu) = O\Big(\nu(\mfu)\frac{\sqrt{x}}{(\Vert\alpha\Vert\Vert\mfu\Vert)^{1/2}|}\Big).
\]
Then, for any even integer $m \geq 0$,
\begin{align}\label{sap1}
S(\cA, \cP) = \frac{2\pi x}{\Vert\alpha\Vert} \prod_{\Vert \mfp \Vert \leq z}\bigg(1 - &\frac{\nu(\mfp)}{\Vert\mfp\Vert}\bigg) + O\bigg(\frac{\sqrt{x}}{\Vert\alpha\Vert^{1/2}}\sum_{\substack{\mfu \mid \mfP \\ \omega(\mfu) \leq m}} \frac{\nu(\mfu)}{\Vert\mfu\Vert^{1/2}}\bigg) \\ &+ O\bigg(\frac{x}{\Vert\alpha\Vert}\sum_{\substack{\mfu \mid \mfP \\ \omega(\mfu) \geq m}} \delta(\mfu)\bigg), \nonumber
\end{align}
where $\mfP = \prod_{\mfp \in \cP} \mfp$.

For a prime $\mfp$, we have $\nu(\mfp) = 2$ if $\alpha \not\equiv 0 \pmod \mfp$ and $\nu(\mfp) = 1$ otherwise. Therefore, the product in the first term is
\begin{align*}
\prod_{\substack{\Vert \mfp \Vert \leq z \\ \mfp \nmid (\alpha)}}\bigg(1 - \frac{2}{\Vert\mfp\Vert}\bigg) &\prod_{\substack{\Vert \mfp \Vert \leq z \\ \mfp \mid (\alpha)}}\bigg(1 - \frac{1}{\Vert\mfp\Vert}\bigg) \\ &\leq \prod_{\Vert \mfp \Vert \leq z}\bigg(1 - \frac{1}{\Vert\mfp\Vert}\bigg)^2 \prod_{\substack{\Vert \mfp \Vert \leq z \\ \mfp \mid (\alpha)}}\bigg(1 - \frac{1}{\Vert\mfp\Vert}\bigg)^{-1} \ll \frac{1}{(\log z)^2}\frac{\Vert\alpha\Vert}{\Phi(\alpha)},
\end{align*}
where in the last step we used Proposition \ref{mertens}.

Choose $z = x^{\frac{1}{200(\log_2 x)^2}}$. Then our first term in (\ref{sap1}) is
\[
\ll \frac{x(\log_2 x)^4}{\Phi(\alpha)(\log x)^2}.
\]
Recall that $\Vert \alpha \Vert = a$, and $a \leq x^{1 - 1/6\log_2 x}$. Since $\Phi(\alpha) \gg \Vert \alpha \Vert/\log_2 x$ (analogous to the minimal order for the usual Euler function, c.f. \cite[Theorem 328]{hw00}), the above is
\begin{align*}\label{maintermalpha}
\ll \frac{x(\log_2 x)^5}{\Vert \alpha \Vert(\log x)^2}.
\end{align*}

We now show that this ``main'' term dominates the two $O$-terms uniformly for $\alpha \in S_a$ and $a \leq x^{1 - 1/6\log_2 x}$. For the first $O$-term, we begin by noting that $\nu(\mfu)/\Vert \mfu \Vert^{1/2} \ll 1$. Then, taking $m = 10\lfloor \log_2 x \rfloor$, we have
\[
\sum_{\substack{\mfu \mid \mfP \\ \omega(\mfu) \leq m}} \frac{\nu(\mfu)}{\Vert \mfu \Vert^{1/2}} \ll \sum_{k = 0}^m \binom{\pi_K(z)}{k} \leq \sum_{k = 0}^m \pi_K(z)^k \leq 2\pi_K(z)^m \leq x^{1/20\log_2 x},
\]
where $\pi_K(z)$ denotes the number of prime ideals $\mfp \subset \bZ[i]$ with norm up to $z$. Therefore, the inequality
\[
\frac{x(\log_2 x)^5}{\Vert \alpha \Vert(\log x)^2} \gg \frac{x^{1/2 + 1/20\log_2 x}}{\Vert \alpha \Vert^{1/2}}
\]
holds for all $\alpha$ with $\Vert \alpha \Vert \leq x^{1 - 1/6\log_2x}$, as desired.

Next we handle the second $O$-term. The sum in this term is
\[
\sum_{\substack{\mfu \mid \mfP \\ \omega(\mfu) \geq m}} \delta(\mfu) \leq \sum_{s \geq m} \frac{1}{s!}\Big(\sum_{\substack{\Vert \mfp \Vert \leq z}} \frac{\nu(\mfp)}{\Vert \mfp \Vert} \Big)^s.
\]
Observe that, by Proposition \ref{mertens}, we have
\[
\sum_{\substack{\Vert \mfp \Vert \leq z}} \frac{\nu(\mfp)}{\Vert \mfp \Vert} \leq 2\log_2x + O(1).
\]
Thus, by the ratio test, one sees that the sum on $s$ is 
\[
\ll \frac{1}{m!}(2\log_2x + O(1))^m.
\]
Using Proposition \ref{mertens} followed by Stirling's formula, we obtain that the above quantity is
\begin{align*}
\frac{1}{m!}(2\log_2 x + O(1))^m &\leq \Big(\frac{2e\log_2 x + O(1)}{10\lfloor \log_2 x \rfloor}\Big)^{10\lfloor \log_2 x \rfloor} \\
&\ll \Big(\frac{e}{5}\Big)^{9\log_2 x} \leq \frac{1}{(\log x)^{5}}.
\end{align*}
So the second $O$-term is
\[
\ll \frac{x}{\Vert \alpha \Vert (\log x)^{5}},
\]
and this is certainly dominated by the main term.
\end{proof}

From Lemmas \ref{ubcase1} and \ref{ubcase2}, we see (\ref{nkub}) can be rewritten
\[
N_k \ll \frac{x(\log_2 x)^5}{(\log x)^2} \sum_{\substack{a \leq x^{1 - 1/6\log_2 x} \\ \omega(a) = k-1}} \frac{|S_a|}{a} + O\Big(\frac{x}{\log^A x}\Big),
\]
noting that $\Vert \alpha \Vert = a$ for all $a$ under consideration and all $\alpha \in S_a$. We are now in a position to bound $N_k$ from above in terms of $k$.

\begin{lemma}
We have
\[
\sum_{\substack{a \leq x^{1 - 1/6\log_2 x} \\ \omega(a) = k-1}} \frac{|S_a|}{a} \leq \frac{(\log_2 x + O(1))^{k-1}}{(k - 1)!}.
\]
\end{lemma}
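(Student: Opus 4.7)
The plan is to exploit that $|S_a|/a$ factors multiplicatively over the prime powers of $a$, reduce the sum to a $(k-1)$st power via the standard symmetric-function bound, and then apply Dirichlet-Mertens to control the resulting one-prime sum. Implicitly, by Lemma \ref{ubcase1}, the only $a$'s actually making a non-negligible contribution are those covered by Case 2, so throughout I assume every $q \equiv 3 \pmod 4$ dividing $a$ has even exponent $v_q$; this restriction will be crucial for getting the constant $1$ (rather than $3/2$) in front of $\log_2 x$.

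First I would check the multiplicative structure. Writing $a = \prod_q q^{v_q}$ and recalling $|S_a| = \prod_{q \equiv 1 \pmod 4,\, q \mid a}(v_q + 1)$ from Definition \ref{setsa}, one has
\[
\frac{|S_a|}{a} = \prod_{q^{v_q} \| a} g(q, v_q),
\]
where $g(2, v) = 2^{-v}$, $g(q, v) = q^{-v}$ for $q \equiv 3 \pmod 4$ (restricted to even $v$), and $g(q, v) = (v+1) q^{-v}$ for $q \equiv 1 \pmod 4$. Grouping the $a$'s with $\omega(a) = k-1$ by their set of prime divisors and using that $a \leq x^{1-1/6\log_2 x}$ forces every such prime to lie below $x$, I get
\[
\sum_{\substack{a \leq x^{1 - 1/6\log_2 x} \\ \omega(a) = k-1}} \frac{|S_a|}{a} \leq \sum_{\substack{q_1 < q_2 < \cdots < q_{k-1} \\ q_i \leq x}} \prod_{i=1}^{k-1} h(q_i), \qquad h(q) := \sum_{v \geq 1} g(q, v).
\]
By expanding $\bigl(\sum_q h(q)\bigr)^{k-1}$ and noting each ordered $(k-1)$-tuple of distinct primes is counted $(k-1)!$ times, this is at most $\frac{1}{(k-1)!}\bigl(\sum_{q \leq x} h(q)\bigr)^{k-1}$.

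It then suffices to show $\sum_{q \leq x} h(q) \leq \log_2 x + O(1)$. A direct computation gives $h(2) = 1$, $h(q) = 1/(q^2 - 1) = O(q^{-2})$ for $q \equiv 3 \pmod 4$, and $h(q) = \sum_{v \geq 1}(v+1)/q^v = (2q-1)/(q-1)^2 = 2/q + O(q^{-2})$ for $q \equiv 1 \pmod 4$. Thus the contributions from $q = 2$ and from $q \equiv 3 \pmod 4$ are both $O(1)$, and
\[
\sum_{q \leq x} h(q) = 2 \sum_{\substack{q \leq x \\ q \equiv 1 \pmod 4}} \frac{1}{q} + O(1) = 2 \cdot \tfrac{1}{2} \log_2 x + O(1) = \log_2 x + O(1),
\]
where the middle equality is Mertens' theorem for primes in the residue class $1 \pmod 4$ (a consequence of Proposition \ref{mertens} applied to $K = \bQ(i)$, since split primes correspond to $q \equiv 1 \pmod 4$ and their norms are $q$ itself). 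Combining this with the elementary-symmetric bound yields the claim.

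The only subtle point—and the one I would emphasize in the writeup—is the even-exponent restriction on primes $q \equiv 3 \pmod 4$. Without it, such primes would contribute $1/(q-1)$ to $h(q)$, adding an extra $\tfrac{1}{2}\log_2 x + O(1)$ and spoiling the constant. The restriction is exactly what Case 2 of the preceding dichotomy supplies, so this is where the bookkeeping between Lemmas \ref{ubcase1} and \ref{ubcase2} pays off.
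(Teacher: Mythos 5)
Your proposal is correct and follows essentially the same route as the paper: exploit multiplicativity of $|S_a|/a$, bound the $k{-}1$-fold restricted sum by $\frac{1}{(k-1)!}(\sum_q h(q))^{k-1}$, and then use Mertens in $\bQ(i)$ (equivalently, Mertens for the class $1 \bmod 4$) to get $\sum_q h(q) = \log_2 x + O(1)$. Your remark about the even-exponent restriction on $q \equiv 3 \pmod 4$ being the source of the constant $1$ rather than $3/2$ is the right thing to flag; the paper makes the same observation, just more tersely.
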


\begin{proof} 
We have already seen that the size of $S_a$ is $\prod_{p \mid a : p \equiv 1 \pmod 4} (v_p + 1)$, where $v_p$ is defined by $p^{v_p} \parallel a$. Recall that in the current case, each prime $p \equiv 3 \pmod 4$ dividing $a$ appears to an even power. Therefore, we have
\begin{align}\label{sumona}
\sum_{\substack{a \leq x \\ \omega(a) = k-1}} \frac{|S_a|}{a} \leq \frac{1}{(k-1)!}\Bigg(\sum_{\substack{p^\ell \leq x \\ p \not\equiv 3 \pmod 4}} \frac{|S_{p^\ell}|}{p^\ell} + \sum_{\substack{p^{2k} \leq x \\ p \equiv 3 \pmod 4}} \frac{|S_{p^{2k}}|}{p^{2k}} + O(1)\Bigg)^{k-1}.
\end{align}
Note that $|S_{p^{2k}}| = 1$ for each prime $p \equiv 3 \pmod 4$. Thus we can absorb the sum corresponding to these primes into the $O(1)$ term, giving
\begin{align}\label{sumonamultinomial}
\sum_{\substack{a \leq x \\ \omega(a) = k-1}} \frac{|S_a|}{a} \ll \frac{1}{(k-1)!}\Bigg(\displaystyle\sum_{\substack{p^\ell \leq x \\ p \not\equiv 3 \pmod 4}} \frac{|S_{p^\ell}|}{p^\ell} + O(1)\Bigg)^{k-1}.
\end{align}
Now
\begin{align*}
\sum_{\substack{p^\ell \leq x \\ p \not\equiv 3 \pmod 4}} \frac{|S_{p^\ell}|}{p^\ell} &= \sum_{\substack{p^\ell \leq x \\ p \equiv 1 \pmod 4}} \frac{\ell + 1}{p^\ell} + O(1) \\
&= \sum_{\substack{p \leq x \\ p \equiv 1 \pmod 4}} \frac{2}{p} + O(1) \\
&= \log_2 x + O(1).
\end{align*}
Inserting this expression into (\ref{sumonamultinomial}) proves the lemma.
\end{proof}

\subsection{Finishing the upper bound.} We have shown so far that
\[
N_k \ll \frac{x(\log_2 x)^5}{(\log x)^2} \cdot \frac{(\log_2 x + O(1))^{k-1}}{(k-1)!}.
\]

We now sum on $k > \gamma\log_2 x$ for fixed $\gamma > 1$ to complete the proof of Theorem \ref{upperbound}. (The statement corresponding to $0 < \gamma < 1$ may be proved in a completely similar way.) Again using the ratio test and Stirling's formula, we have
\begin{align*}
\sum_{k > \gamma\log_2 x} &\frac{(\log_2 x + O(1))^{k-1}}{(k-1)!} \ll \bigg(\frac{e\log_2 x + O(1)}{\lfloor\gamma\log_2 x\rfloor}\bigg)^{\lfloor\gamma\log_2 x\rfloor} \\
&\ll \bigg(\frac{e}{\gamma}\Big(1 + O\Big(\frac{1}{\log_2 x}\Big)\Big)\bigg)^{\lfloor\gamma\log_2 x\rfloor} \ll \Big(\frac{e}{\gamma}\Big)^{\lfloor\gamma\log_2 x\rfloor} \ll_\gamma (\log x)^{\gamma - \gamma\log\gamma}.
\end{align*}
Thus, we have obtained an upper bound of
\[
\ll_\gamma \frac{x(\log_2 x)^5}{(\log x)^{2+\gamma\log\gamma - \gamma}},
\]
as desired.

\section{A lower bound}

\begin{theorem}\label{lowerbound}
Consider $E : y^2 = x^3 - x$ and fix $\gamma > 1$. Then
\[
\#\{p \leq x : \omega(\efp) > \gamma \log_2 x\} \geq \frac{x}{(\log x)^{2 + \gamma\log\gamma - \gamma + o(1)}}.
\]
The same statement is true if instead $0 < \gamma < 1$ and the strict inequality is reversed on the left-hand side.
\end{theorem}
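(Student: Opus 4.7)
The plan is to adapt the Delange--Erd\H{o}s large-deviation construction to the elliptic curve setting. Set $k := \lfloor \gamma \log_2 x \rfloor + 1$, so that $\omega(\efp) \geq k$ implies $\omega(\efp) > \gamma \log_2 x$. Choose $z := \exp(\log x / (\log_2 x)^C)$ for a sufficiently large absolute constant $C$; then $\log_2 z \sim \log_2 x$, while the norm of any product of $k$ rational primes below $z$ is $x^{o(1)}$, safely inside the Bombieri--Vinogradov range. Let $\mathcal{Q} := \{q \leq z : q \equiv 1 \pmod 4\}$, and let $\mathcal{A}$ denote the family of squarefree integers of the form $a = q_1 \cdots q_{k-1}$ with distinct $q_i \in \mathcal{Q}$.

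For each $a \in \mathcal{A}$ and each $\alpha \in S_a$ (Definition \ref{setsa}), Proposition \ref{bv} yields on average
\[
\#\{\pi \text{ primary, prime} : \Vert\pi\Vert \leq x,\, \alpha \mid \pi - 1\} \sim \frac{\Li(x)}{\Phi(\alpha)}.
\]
Among such $\pi$, discard those for which $\beta := (\pi - 1)/\alpha$ has $z$-smooth norm; by Proposition \ref{smooth} with $u \gg (\log_2 x)^C$, these number only $O(x / u^{u(1+o(1))})$, super-polynomially smaller than the main term. Each surviving $\pi$ then produces $p = \Vert \pi \Vert$ with $a \mid \efp$ and $\efp/a$ possessing a rational prime factor $\ell > z$; since $\ell \notin \mathcal{Q}$, this is necessarily a new prime factor, so $\omega(\efp) \geq k$.

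Summing over $\alpha \in S_a$ (of which there are $|S_a| = 2^{k-1}$), halving for the two conjugate Gaussian primes above each rational $p$, and then summing over $a \in \mathcal{A}$, while invoking the additive form of Mertens (cf.\ the remark after Proposition \ref{mertens}) in the shape $\sum_{q \in \mathcal{Q}} 1/q \sim \tfrac{1}{2} \log_2 x$, produces a pair count of size
\[
\sim \frac{x}{\log x} \cdot \frac{(\log_2 x)^{k-1}}{(k-1)!} \sim \frac{x}{(\log x)^{1 + \gamma \log \gamma - \gamma + o(1)}},
\]
after Stirling's formula is applied with $k = \gamma \log_2 x + O(1)$.

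The main obstacle is overcounting: each prime $p$ receives weight $\binom{\omega_{\mathcal{Q}}(\efp)}{k-1}$, where $\omega_{\mathcal{Q}}(\efp)$ denotes the number of primes of $\mathcal{Q}$ dividing $\efp$. Using the already-established upper bound of Theorem \ref{upperbound}, one finds that $\#\{p : \omega(\efp) \geq k + s\}$ decays roughly like $\gamma^{-s}$ relative to $\#\{p : \omega(\efp) \geq k\}$, so the effective overcount is
\[
\sum_{s \geq 0} \binom{k-1+s}{s} \gamma^{-s} = O(e^{k/\gamma}) = O(\log x).
\]
Dividing the pair count by this factor recovers the target $x / (\log x)^{2 + \gamma \log \gamma - \gamma + o(1)}$. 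The case $0 < \gamma < 1$ is handled by an analogous construction demanding that $\efp/a$ itself be a rational prime (so that $\omega(\efp) = k = \lceil \gamma \log_2 x \rceil$ exactly); the extra factor of $1/\log x$ now arises from enforcing primality of $\efp/a$ via the two-sided Brun sieve (Theorem \ref{brun}) exactly as in Lemma \ref{ubcase2}.
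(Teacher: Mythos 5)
Your high-level plan recovers the right pair count, but the argument has a genuine gap at the overcounting step that the paper's own proof avoids structurally. You sum over divisors $a = q_1 \cdots q_{k-1}$ of $\efp$, discard $\pi$ for which $(\pi-1)/\alpha$ is $z$-smooth, and then try to deflate the resulting pair count by an ``effective overcount'' factor. Crucially, removing $z$-smooth $\beta$ only guarantees that $(\pi-1)/\alpha$ has \emph{some} prime factor above $z$; it does not guarantee it has \emph{no} prime factor below $z$. Consequently the data $(a,p)$ do not determine $a$ from $p$: a single $p$ with $\omega_{\mathcal{Q}}(\efp) = j$ is hit by $\binom{j}{k-1}$ values of $a$, and you must control this multiplicity. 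The paper sidesteps this entirely: it fixes an ideal $\mfs$ with $\omega(\mfs)=k$ and $P^{+}(\Vert\mfs\Vert) \le z$, and then uses Brun's sieve to restrict to $\pi$ with $P^{-}(\Vert\pi-1\Vert/\Vert\mfs\Vert) > z$. These two conditions together make $\mfs$ the \emph{exact} product of the small prime ideal divisors of $(\pi-1)$, so the sets $\cM_\mfs$ are pairwise disjoint and $\sum_\mfs \#\cM_\mfs$ counts each $\pi$ at most once. There is nothing to deflate.

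Your attempted deflation is also quantitatively incorrect. The identity $\sum_{s \ge 0}\binom{k-1+s}{s}\gamma^{-s} = (1-1/\gamma)^{-k} = (\gamma/(\gamma-1))^{k}$ holds, and since $\log\bigl(\gamma/(\gamma-1)\bigr) = -\log(1-1/\gamma) > 1/\gamma$ strictly for every finite $\gamma > 1$, this is \emph{not} $O(e^{k/\gamma})$. With $k = \gamma\log_2 x + O(1)$ one gets an overcount of order $(\log x)^{\gamma\log(\gamma/(\gamma-1))}$, and $\gamma\log(\gamma/(\gamma-1)) > 1$ for all $\gamma > 1$ (for instance it equals $2\log 2 \approx 1.39$ at $\gamma = 2$, and tends to $1$ only as $\gamma \to \infty$). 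Dividing your pair count $x/(\log x)^{1+\gamma\log\gamma-\gamma+o(1)}$ by that factor yields an exponent strictly exceeding $2+\gamma\log\gamma-\gamma$, i.e.\ a lower bound strictly weaker than the theorem. On top of this, ``divide by the average multiplicity'' is not a legal way to produce a lower bound on $\#\{p : \cdots\}$ from a weighted count $\sum_p \binom{\omega_{\mathcal{Q}}(\efp)}{k-1}$; one would at minimum need to truncate the weights and invoke an a priori upper bound on the tail of $\omega_{\mathcal{Q}}(\efp)$, which is not carried out. Finally, your sketch for $0 < \gamma < 1$ asks that $\efp/a$ be a \emph{rational prime}, which puts you in a twin-prime-type, sieve-dimension-two situation where Brun's pure sieve cannot deliver a positive lower bound; the paper instead only requires that $\omega(\Vert\pi-1\Vert/\Vert\mfs\Vert)$ be at most $\log_2 x/\log_4 x$, a much softer condition verified by Lemma \ref{smallgammasieve} to hold for almost all $\pi \in \cM_\mfs$, and then adjusts the choice of $k$ accordingly.
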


Our strategy in the case $\gamma > 1$ is as follows. As before, we write $\efp = \Vert \pi - 1 \Vert$, where $\pi \equiv 1 \pmod{(1 + i)^3}$ and $p = \pi\ol{\pi}$. Let $k$ be an integer to be specified later and fix an ideal $\mfs \in \bZ[i]$ with the following properties:
\begin{itemize}
\item[(A)] $((1 + i)^3) \mid \mfs$
\item[(B)] $\omega(\mfs) = k$
\item[(C)] $P^{+}(\Vert \mfs \Vert) \leq x^{1/100\gamma\log_2x}$
\item[(D)] Each prime ideal $\mfp \mid \mfs$ (with the exception of $(1 + i)$) lies above a rational prime $p \equiv 1 \pmod 4$
\item[(E)] Distinct $\mfp$ dividing $\mfs$ lie above distinct $p$
\item[(F)] $\mfs$ squarefree
\end{itemize}
Here $P^{+}(n)$ denotes the largest prime factor of $n$. Note that we have $\omega(\mfs) = \omega(\Vert \mfs \Vert)$. First, we will estimate from below the size of the set $\cM_\mfs$, defined to be the set of those $\pi \in \bZ[i]$ with $\Vert \pi \Vert \leq x$ satisfying the following properties:
\begin{enumerate}
	\item $\pi$ prime (in $\bZ[i]$)
	
	\item $\Vert \pi \Vert$ prime (in $\bZ$)
	
	\item $\pi \equiv 1 \pmod \mfs$
	
	\item $P^-\Big(\frac{\Vert\pi - 1\Vert}{\Vert\mfs\Vert}\Big) > x^{1/100\gamma\log_2 x}$.
\end{enumerate}
Here $P^{-}(n)$ denotes the smallest prime factor of $n$. The conditions on the size of the prime factors of $\Vert \mfs \Vert$ and $\Vert \pi - 1 \Vert / \Vert \mfs \Vert$ imply that each $\pi$ with $\Vert \pi \Vert \leq x$ belongs to at most one of the sets $\cM_\mfs$. If $k$ is chosen to be greater than $\gamma \log_2 x$, then carefully summing over $\mfs$ satisfying the conditions above yields a lower bound on the count of distinct $\pi$ corresponding to $p$ with the property that $\omega(\efp) \geq k > \gamma\log_2 x$. The problem of counting elements $\pi$ and $\ol{\pi}$ with $p = \pi\ol{\pi}$ is remedied by inserting a factor of $\frac{1}{2}$, which is of no concern for us.

More care is required in the case $0 < \gamma < 1$, which is handled in Section \ref{smallgamma}.

\subsection{Preparing for the proof of Theorem \ref{lowerbound}} Suppose the fixed ideal $\mfs$ is generated by $\sigma \in \bZ[i]$. We will estimate from below the size of $\cM_\mfs$ using Theorem 2.1. Define $\cA$ to be the sequence of elements of $\bZ[i]$ of the form
\[
\Big\{\frac{\pi - 1}{\sigma} : \Vert \pi \Vert \leq x, \pi \text{ prime, and } \pi \equiv 1 \pmod{\sigma}\Big\}.
\] 
Let $\cP$ denote the set of prime ideals $\{ \mfp : \Vert \mfp \Vert \leq z\}$, where $z := x^{1/50\gamma\log_2 x}$. Let $\mfP := \prod_{\mfp \in \cP} \mfp$. If $\frac{\pi-1}{\sigma} \equiv 0 \pmod \mfp$ implies $\Vert \mfp \Vert \geq z$, then all primes $p \mid \Vert \frac{\pi - 1}{\sigma} \Vert$ have $p > x^{1/100\gamma\log_2x}$. Note also that if a prime $\pi \in \bZ[i]$, $\Vert \pi \Vert \leq x$ is such that $\Vert \pi \Vert$ is not prime, then $\Vert \pi \Vert = p^2$ for some rational prime $p$, and so the count of such $\pi$ is clearly $O(\sqrt{x})$. Therefore, we have
\[
\#\cM_\mfs \geq S(\cA, \cP) + O(\sqrt{x}).
\]

\begin{lemma}\label{msigmasieve}
With $\cM_\mfs$ defined as above, we have
\[
\#\cM_\mfs \geq c \cdot \frac{\Li(x)\log_2 x}{\Phi(\mfs)\log x} + O\bigg(\sum_{\substack{\mfu \mid \mfP \\ \omega(\mfu) \leq m}} |r(\mfu\mfs)|\bigg) + O\bigg(\frac{1}{\Phi(\mfs)}\frac{\Li(x)}{(\log x)^{22}}\bigg) + O(\sqrt{x}),
\]
where $r(\mfv) = |\frac{\Li(x)}{\Phi(\mfv)} - \pi(x; \mfv, 1)|$ and $c > 0$ is a constant.
\end{lemma}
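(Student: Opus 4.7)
The plan is to apply Brun's sieve (Theorem \ref{brun}) directly to the sequence $\cA$ and prime set $\cP$ as set up in the paragraph before the lemma. First I would identify the sieve parameters. For an ideal $\mfu \mid \mfP$ coprime to $\mfs$, the condition $(\pi-1)/\sigma \equiv 0 \pmod{\mfu}$ is equivalent to $\pi \equiv 1 \pmod{\mfu\mfs}$, so $A_\mfu = \pi(x;\mfu\mfs,1)$. Setting $X = w_K \Li(x)/\Phi(\mfs)$ and the multiplicative function $\delta(\mfp) = 1/\Phi(\mfp)$, the resulting remainder $A_\mfu - X\delta(\mfu) = \pi(x;\mfu\mfs,1) - w_K\Li(x)/\Phi(\mfu\mfs)$ is comparable to $r(\mfu\mfs)$ up to an absolute constant factor. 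The primes of $\cP$ dividing $\mfs$ (most importantly $(1+i)$, where $\Phi(\mfp)=1$ would make $\delta(\mfp)=1$) can be harmlessly excluded from $\cP$, since $\omega(\mfs) = k \ll \log_2 x$; condition (A) ensures $(1+i) \mid \mfs$, so this exclusion is automatic.

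Next I would extract the main term. Writing
\[
\frac{1 - 1/\Phi(\mfp)}{1 - 1/\Vert\mfp\Vert} = 1 - \frac{1}{(\Vert\mfp\Vert - 1)^2}\qquad (\Vert\mfp\Vert \geq 3),
\]
the two products differ by a positive convergent constant, so Proposition \ref{mertens} yields
\[
X \prod_{\substack{\Vert\mfp\Vert \leq z \\ \mfp \nmid \mfs}}\Big(1 - \frac{1}{\Phi(\mfp)}\Big) \gg \frac{\Li(x)}{\Phi(\mfs)\log z} \gg \frac{\Li(x)\log_2 x}{\Phi(\mfs)\log x},
\]
since $\log z = (\log x)/(50\gamma\log_2 x)$. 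This produces the leading term with a positive constant $c$.

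For the error analysis I would choose $m = \lceil C\log_2 x\rceil$ for a sufficiently large absolute constant $C$. Theorem \ref{brun} produces the first $O$-term of the lemma directly as the remainder sum. For the second $O$-term, the additive Mertens formula gives $\sum_{\Vert\mfp\Vert\leq z}\delta(\mfp) = \log_2 z + O(1) = \log_2 x + O(\log_3 x)$, so by the ratio test and Stirling's formula,
\[
\sum_{\substack{\mfu \mid \mfP \\ \omega(\mfu) \geq m}}\delta(\mfu) \;\leq\; \sum_{s \geq m}\frac{(\log_2 x + O(1))^s}{s!} \;\ll\; \Big(\frac{e\log_2 x}{m}\Big)^m.
\]
Choosing $C$ so that $C\log(C/e) > 22$ forces this to be at most $(\log x)^{-22}$, and multiplication by $X = w_K\Li(x)/\Phi(\mfs)$ gives the stated second $O$-term. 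The $O(\sqrt{x})$ contribution was already reserved upstream to account for Gaussian primes $\pi$ whose norm fails to be a rational prime.

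I expect the principal subtlety to be not any single calculation but the bookkeeping around the truncation level $m$: it must be large enough to make the tail $(\log x)^{-22}$, yet small enough that $\omega(\mfu) \leq m$ keeps $\Vert\mfu\mfs\Vert$ well under $\sqrt{x}(\log x)^{-B}$ so that Proposition \ref{bv} can eventually dispose of $\sum|r(\mfu\mfs)|$ in the lemma's first $O$-term. Given $z = x^{1/50\gamma\log_2 x}$ and $k, m \ll \log_2 x$, one has $\Vert\mfu\mfs\Vert \leq z^{m+k} \leq x^{o(1)}$, so this balance is comfortable; the real work in verifying the Bombieri--Vinogradov step lies outside the scope of this lemma.
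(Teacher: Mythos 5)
Your overall structure matches the paper's — same $\cA$, $\cP$, $X$, same Mertens-type analysis of the main term, same Stirling bound on the tail, and the truncation level $m \asymp \log_2 x$ is chosen for the same reason. But there is a genuine gap in the treatment of the primes dividing $\mfs$, and it is precisely the point that your parenthetical aside identifies and then steps over.

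You propose setting $\delta(\mfp) = 1/\Phi(\mfp)$ as a multiplicative function on \emph{all} primes and then excluding from $\cP$ the primes dividing $\mfs$ (in particular $(1+i)$, to avoid the apparent catastrophe $\delta((1+i)) = 1/\Phi((1+i)) = 1$). This exclusion is not harmless; it breaks the chain of inequalities at the very first step. With $\cP' = \cP \setminus \{\mfp : \mfp \mid \mfs\}$, the quantity $S(\cA, \cP')$ is no longer a lower bound (up to $O(\sqrt{x})$) for $\#\cM_\mfs$: a $\beta = (\pi-1)/\sigma$ that survives the sieve by $\cP'$ may still be divisible by $(1+i)$, and such a $\beta$ has $2 \mid \Vert\pi - 1\Vert/\Vert\mfs\Vert$, so the corresponding $\pi$ violates property (4) defining $\cM_\mfs$. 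These ``contaminating'' $\pi$ are a positive proportion of $S(\cA, \cP')$ (roughly half, from $(1+i)$ alone), so the inequality $\#\cM_\mfs \geq S(\cA, \cP') + O(\sqrt{x})$ simply fails, and the rest of the argument lower-bounds the wrong quantity. The fact that $\omega(\mfs) \ll \log_2 x$ says there are few such primes, but $(1+i)$ by itself already contributes a multiplicative factor, not an additive error.

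The resolution is not to discard these primes but to compute their density correctly. The paper keeps $\mfp \mid \mfs$ in $\cP$ and defines $\delta(\mfu) = \Phi(\mfs)/\Phi(\mfu\mfs)$, which agrees with $1/\Phi(\mfu)$ when $\gcd(\mfu,\mfs)=1$ but gives $\delta(\mfp) = 1/\Vert\mfp\Vert$ when $\mfp \mid \mfs$. In particular $\delta((1+i)) = 1/2$, not $1$: given $\pi \equiv 1 \pmod \sigma$, the density of $\pi$ with $(1+i) \mid (\pi-1)/\sigma$ is $\Phi(\mfs)/\Phi((1+i)\mfs) = 1/2$, since $(1+i)$ already divides $\mfs$. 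So the sieve product does not vanish, and the factor
\[
\prod_{\substack{\Vert\mfp\Vert \leq z \\ \mfp \mid \mfs}}\Big(1 - \frac{1}{\Vert\mfp\Vert}\Big)
\]
absorbed harmlessly into your Mertens computation is what the paper's second displayed manipulation is bookkeeping. If you replace ``exclude $\mfp \mid \mfs$ from $\cP$'' with ``use $\delta(\mfu) = \Phi(\mfs)/\Phi(\mfu\mfs)$,'' the rest of your argument (choice of $m$, the $(\log x)^{-22}$ tail, pushing the remainder off to Bombieri--Vinogradov) goes through essentially as in the paper.
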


\begin{proof}
First, note that we expect the size of $\cA$ to be approximately $X := 4\frac{\Li(x)}{\Phi(\mfs)}$. Write $A_\mfu = \#\{a \in \cA : \mfu \mid a\}$. Then
\[
A_\mfu = X\delta(\mfu) + r(\mfu\mfs),
\]
where $\delta(\mfu) = \frac{\Phi(\mfs)}{\Phi(\mfu\mfs)}$ and $r(\mfu\mfs) = |4\frac{\Li(x)}{\Phi(\mfu\mfs)} - \pi(x; \mfu\mfs, 1)|$. By Theorem \ref{brun}, for any even integer $m \geq 0$ we have
\begin{align*}
S(\cA, \cP) = 4\frac{\Li(x)}{\Phi(\mfs)} \prod_{\Vert \mfp \Vert \leq z}\bigg(1 - &\frac{\Phi(\mfs)}{\Phi(\mfp\mfs)}\bigg) + O\bigg(\sum_{\substack{\mfu \mid \mfP \\ \omega(\mfu) \leq m}} |r(\mfu\mfs)|\bigg) \\ &+ O\bigg(\frac{\Li(x)}{\Phi(\mfs)}\sum_{\substack{\mfu \mid \mfP \\ \omega(\mfu) \geq m}} \delta(\mfu)\bigg).
\end{align*}

Using Proposition \ref{mertens}, we have
\begin{align*}
\prod_{\Vert \mfp \Vert \leq z}\bigg(1 - \frac{\Phi(\mfs)}{\Phi(\mfp\mfs)}\bigg) &= \prod_{\substack{\Vert \mfp \Vert \leq z \\ \mfp \nmid \mfs}} \bigg(1 - \frac{1}{\Phi(\mfp)}\bigg) \prod_{\substack{\Vert \mfp \Vert \leq z \\ \mfp \mid \mfs}} \bigg(1 - \frac{1}{\Vert \mfp \Vert}\bigg) \\
&= \prod_{\Vert \mfp \Vert \leq z}\bigg(1 - \frac{1}{\Vert \mfp \Vert}\bigg)\prod_{\substack{\Vert \mfp \Vert \leq z \\ \mfp \nmid \mfs}}\bigg(1 - \frac{1}{(\Vert \mfp \Vert - 1)^2}\bigg) \\ 
&\gg \frac{1}{\log z} = \frac{\log_2 x}{\log x}.
\end{align*}

Take $m = 14\lfloor \log_2 x \rfloor$. We leave aside the first $O$-term and concentrate for now on the second. This term is handled in essentially the same way as in the proof of the upper bound: The sum in the this term is bounded from above by
\[
\sum_{s \geq m} \frac{1}{s!}\Big(\sum_{\substack{\Vert \mfp \Vert \leq z}} \delta(\mfp)\Big)^s.
\]
By Proposition \ref{mertens}, we have
\[
\sum_{\substack{\Vert \mfp \Vert \leq z}} \delta(\mfp) \leq \log_2 x + O(1).
\]
Now, one sees once again by the ratio test that the sum on $s$ is 
\[
\ll \frac{1}{m!}\Big(\sum_{\substack{\Vert \mfp \Vert \leq z}} \delta(\mfp)\Big)^m \leq \frac{1}{m!}(\log_2 x + O(1))^m.
\]
Thus, by the same calculations as in the proof of Theorem \ref{upperbound}, the second $O$-term is
\[
\ll \frac{\Li(x)}{\Phi(\mfs)(\log x)^{22}},
\]
completing the proof of the lemma.
\end{proof}

We now sum this estimate over $\sigma$ in an appropriate range to deal with the $O$-terms and establish a lower bound. Here, the cases $\gamma > 1$ and $0 < \gamma < 1$ diverge.

\subsection{The case $\gamma > 1$.} The argument in this case is somewhat simpler. Recall that $\mfs$ is chosen to satisfy properties A through F listed below Theorem \ref{lowerbound}; in particular, $\omega(\mfs) = k$ for some integer $k$ and $P^{+}(\Vert \mfs \Vert) \leq x^{1/100\gamma\log_2x}$. Choose $k := \lfloor \gamma\log_2x \rfloor + 2$. Since $\omega(\Vert \mfs \Vert) = \omega(\mfs)$, we have that $\Vert \mfs \Vert \leq x^{k/100\gamma\log_2 x} \leq x^{1/10}$. A lower bound follows by estimating the quantity
\[
\cM = \sideset{}{'}\sum_{\mfs} \#\cM_\mfs,
\]
where the prime indicates a restriction to those ideals $\mfs \subset \bZ[i]$ satisfying properties A through F mentioned above.

\begin{lemma}\label{lowerboundonm}
We have
\[
\cM \gg \frac{x\log_2 x(\log_2 x + O(\log_3 x))^k}{k!(\log x)^2}.
\]
\end{lemma}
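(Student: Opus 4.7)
The strategy is to sum the lower bound of Lemma~\ref{msigmasieve} over all ideals $\mfs$ satisfying (A)--(F) and verify that the aggregated main term dominates the aggregated error terms. The main term aggregates to $c\cdot(\Li(x)\log_2 x/\log x)\cdot\sideset{}{'}\sum_{\mfs}\Phi(\mfs)^{-1}$, so the heart of the matter is to lower-bound this last sum.

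I would parametrize each admissible $\mfs$ as $(1+i)^3\prod_{i=1}^{k-1}\mfp_i$, where the $\mfp_i$ are distinct prime ideals of $\bZ[i]$ lying above distinct rational primes $p_i\equiv 1\pmod 4$ with $p_i\leq z_0:=x^{1/(100\gamma\log_2 x)}$. Using $\Phi((1+i)^3)=4$ and $\Phi(\mfp_i)=p_i-1$, and accounting for the two prime ideals above each split $p$ via a factor $2^{k-1}$, the sum rewrites as $\tfrac14\sigma_{k-1}(\mathbf y)$, the $(k-1)$-th elementary symmetric function of the weights $y_p:=2/(p-1)$ (over $p\equiv 1\pmod 4$, $p\leq z_0$). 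Lemma~\ref{elementary} yields $\sigma_{k-1}\geq \sigma_1^{k-1}/(k-1)!$ times a correction factor. Mertens for $\bZ[i]$ (Proposition~\ref{mertens}) restricted to $p\equiv 1\pmod 4$ gives $\sigma_1=\log_2 z_0+O(1)=\log_2 x+O(\log_3 x)$; meanwhile $\sum_p y_p^2=O(1)$, so further restricting the split-prime factors of $\mfs$ to lie above a sufficiently large $\gamma$-dependent constant (at a cost of $O(1)$ in $\sigma_1$, absorbed in the $O(\log_3 x)$) keeps the correction factor bounded away from zero. Thus
\[
\sideset{}{'}\sum_\mfs\frac{1}{\Phi(\mfs)}\gg \frac{(\log_2 x+O(\log_3 x))^{k-1}}{(k-1)!}\gg_\gamma \frac{(\log_2 x+O(\log_3 x))^k}{k!},
\]
the last step using $k\asymp\gamma\log_2 x$. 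Multiplication by $c\Li(x)\log_2 x/\log x\asymp x\log_2 x/(\log x)^2$ delivers the claimed main-term lower bound.

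It remains to dispose of the error terms. The $O(\sqrt x)$ contributions, summed over at most $\pi(z_0)^{k-1}\cdot 2^{k-1}=x^{o(1)}$ ideals $\mfs$, total $O(x^{1/2+o(1)})$ and are easily absorbed. The $O(\Li(x)/(\Phi(\mfs)(\log x)^{22}))$ terms aggregate (by the same calculation used for the main term) to $O(x/(\log x)^{23-o(1)})$, negligibly smaller than the main term. The main obstacle is the Bombieri--Vinogradov-type error $\sum_\mfs\sum_{\mfu\mid\mfP,\,\omega(\mfu)\leq m}|r(\mfu\mfs)|$. One must verify that $\mfu\mfs$ lies in the Bombieri--Vinogradov range: with $m=14\lfloor\log_2 x\rfloor$, $\Vert\mfu\Vert\leq z^m=x^{7/(25\gamma)}$ and $\Vert\mfs\Vert\leq x^{1/10}$, so $\Vert\mfu\mfs\Vert<x^{1/2-\eps}$ for all $\gamma>1$. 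Regrouping the double sum by $\mfv:=\mfu\mfs$ introduces a multiplicity factor $\leq 2^{\omega(\mfv)}\leq(\log x)^{15\log 2}$ bounding the number of admissible decompositions; Proposition~\ref{bv} then controls $\sum_{\mfv}|r(\mfv)|$ by $\ll x/(\log x)^{A'}$ for arbitrarily large $A'$, and the extra polylogarithmic factor is swallowed. Calibrating the sieve parameters $m$ and $\Vert\mfs\Vert$ against the $\Vert\mfq\Vert\leq x^{1/2-\eps}$ ceiling of Proposition~\ref{bv}---large enough to kill the truncation tail inside each $\cM_\mfs$ yet small enough to remain in BV range---is the delicate point.
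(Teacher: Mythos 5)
Your plan follows the same skeleton as the paper: sum the sieve lower bound of Lemma~\ref{msigmasieve} over admissible $\mfs$, estimate the resulting $\sideset{}{'}\sum_\mfs \Phi(\mfs)^{-1}$ from below via Lemma~\ref{elementary} and Mertens, and control the Bombieri--Vinogradov error by regrouping $\mfu\mfs$ into a single modulus. The one place you take a genuinely different route is the BV error. The paper writes the regrouped sum as $\sum_\mfq |r(\mfq)|\tau(\mfq)$, then applies Cauchy--Schwarz together with the pointwise bound $|r(\mfq)|\ll x/\Phi(\mfq)$ and an Euler-product estimate $\sum\tau(\mfq)^2/\Phi(\mfq)\ll(\log x)^4$, arriving at $x/(\log x)^{A/2-2}$. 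You instead observe that, since $\mfu$ and $\mfs$ are both squarefree, the multiplicity of any $\mfv=\mfu\mfs$ is at most $2^{\omega(\mfu)+\omega(\mfs)}\leq 2^{m+k}$, a fixed polylogarithmic factor, which you then pull outside $\sum_\mfv|r(\mfv)|$ before invoking Proposition~\ref{bv}. This is cleaner and equally valid, since $A$ in Proposition~\ref{bv} may be taken arbitrarily large. There is also a slight divergence in the construction of the prime pool: the paper restricts to $\Vert\mfp\Vert\in(e^{(\log_2 x)^2/k},x^{1/10k})$, so the growing lower endpoint forces $\sum_\mfp\Phi(\mfp)^{-2}\to 0$, costing $O(\log_3 x)$ in $S_1$; you instead cut off below a fixed $\gamma$-dependent constant $C_\gamma$, costing only $O_\gamma(1)$ but requiring $C_\gamma$ to be large enough that $\binom{k-1}{2}S_1^{-2}\sum y_p^2<1$. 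Both work.

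Two small numerical slips, neither fatal: your multiplicity bound should read $2^{\omega(\mfv)}\leq(\log x)^{(14+\gamma)\log 2+o(1)}$ rather than $(\log x)^{15\log 2}$ (the exponent must grow with $\gamma$ since $\omega(\mfs)=k\approx\gamma\log_2 x$); and the number of admissible $\mfs$ is on the order of $x^{1/100}$, a positive power of $x$, not $x^{o(1)}$. In both cases the surrounding slack (an arbitrarily large $A'$ in the first case, the gap between $x^{1/2}$ and $x$ in the second) easily absorbs the correction.
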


\begin{proof}

Since $\sum_{\Vert \mfs \Vert \leq x} 1/\Phi(\mfs) \ll \log x$, the second $O$-term in Lemma \ref{msigmasieve} is, upon summing on $\mfs$, bounded by a constant times $\Li(x)/(\log x)^{21}$. The third error term, $O(\sqrt{x})$, is therefore safely absorbed by this term.

We now handle the sum over $\mfs$ of the first $O$-term. We have $|r(\mfu\mfs)| = |\pi(x; \mfu\mfs, 1) - 4\frac{\Li(x)}{\Phi(\mfu\mfs)}|$. We can think of the double sum (over $\mfs$ and $\mfu$) as a single sum over a modulus $\mfq$, inserting a factor of $\tau(\mfq)$ to account for the number of ways of writing $\mfq$ as a product of two ideals in $\bZ[i]$. (Here, $\tau(\mfq)$ is the number of ideals in $\bZ[i]$ which divide $\mfq$.) Recalling our choice of $m = 14\lfloor \log_2 x \rfloor$, we have
\begin{align*}
\sum_{\Vert \mfs \Vert \leq x^{1/10}} \sum_{\substack{\mfu \mid \mfP \\ \omega(\mfu) \leq m}} |r(\mfu\mfs)| &\ll \sum_{\Vert \mfq \Vert < x^{2/5}} \Big\vert\pi(x; \mfq, 1) - \frac{\Li(x)}{\Phi(\mfq)}\Big\vert \cdot \tau(\mfq).
\end{align*}
The restriction $\Vert \mfq \Vert \leq x^{2/5}$ comes from $\Vert \mfs \Vert \leq x^{1/10}$ and $\Vert \mfu \Vert \leq x^{m/50\gamma\log_2x} \leq x^{.28}$, recalling $m = 14\lfloor \log_2x \rfloor$ and $\gamma > 1$. Now, for all $y > 0$ and nonzero $\mfi \subset \bZ[i]$ we have $\pi(y; \mfi, 1) \ll y/\Vert \mfi \Vert$; indeed, the same inequality is true with $\pi(y; \mfi, 1)$ replaced by the count of all proper ideals $\equiv 1 \pmod \mfi$. Thus
\[
\Big|\pi(x; \mfq, 1) - 4\frac{\Li(x)}{\Phi(\mfq)}\Big|  \ll \frac{x}{\Phi(\mfq)}.
\]
Using this together with the Cauchy-Schwarz inequality and Proposition \ref{bv}, we see that, for any $A > 0$,
\begin{align*}
\sum_{\Vert \mfq \Vert < x^{2/5}} \vert\pi(x; \mfq, 1) - 4\frac{\Li(x)}{\Phi(\mfq)}\vert\tau(\mfq) &\ll \sum_{\Vert \mfq \Vert < x^{2/5}} \vert\pi(x; \mfq, 1) - 4\frac{\Li(x)}{\Phi(\mfq)}\vert^{1/2} \Big(\frac{x}{\Phi(\mfq)}\Big)^{1/2} \tau(\mfq) \\
&\ll \Big(x\sum_{\Vert \mfq \Vert < x^{2/5}} \frac{\tau(\mfq)^2}{\Phi(\mfq)}\Big)^{1/2}\Big(\frac{x}{(\log x)^A}\Big)^{1/2}.
\end{align*}
We can estimate this sum using an Euler product:
\begin{align*}
\sum_{\Vert \mfq \Vert < x^{2/5}} \frac{\tau(\mfq)^2}{\Phi(\mfq)} &\ll \prod_{\Vert \mfp \Vert \leq x^{2/5}} \Big(1 + \frac{4}{\Vert \mfp \Vert}\Big) \\
&\leq \exp\Big\{ \sum_{\Vert \mfp \Vert \leq x^{2/5}} \frac{4}{\Vert \mfp \Vert} \Big\} \ll (\log x)^4.
\end{align*}
Collecting our estimates, we see that the total error is at most $x/(\log x)^{A/2 - 2}$, which is acceptable if $A$ is chosen large enough.

For the main term, we need a lower bound for the sum
\begin{align}\label{phirecip}
\cM = \sideset{}{'}\sum_{\mfs} \frac{1}{\Phi(\mfs)}.
\end{align}
Let $I = (e^{(\log_2 x)^2/k}, x^{1/10k})$. Define a collection of prime ideals $\cP$ such that each $\mfp \in \cP$ lies above a prime $p \equiv 1 \pmod 4$, each prime $p \equiv 1 \pmod 4$ has exactly one prime ideal lying above it in $\cP$, and $\Vert \mfp \Vert \in I$. We apply Lemma \ref{elementary}, with the $y_i$ chosen to be of the form $1/\Phi(\mfp)$ with $\mfp \in \cP$, obtaining
\begin{align}\label{sum1}
\frac{1}{\Phi((1 + i)^3)} &\sideset{}{'}\sum_{\mfs : \mfp \mid (\mfs/(1 + i)^3) \implies \mfp \in \cP}  \frac{1}{\Phi(\mfs/(1 + i)^3)} \\ &\gg \frac{1}{(k-1)!} \Bigg(\sum_{\mfp \in \cP} \frac{1}{\Phi(\mfp)}\Bigg)^{k-1}\Bigg(1 - \binom{k-1}{2} \Big(\frac{1}{S_1^2}\Big)\sum_{\mfp \in \cP} \frac{1}{\Phi(\mfp)^2}\Bigg), \nonumber
\end{align}
where
\[
S_1 = \sum_{\mfp \in \cP} \frac{1}{\Phi(\mfp)}.
\]
By Theorem \ref{mertens}, $S_1 = \tfrac{1}{2}\log_2 x - 2\log_3 x + O(1)$. This introduces a factor of $\frac{1}{2^{k-1}}$ to the right-hand side of (\ref{sum1}), but this is of no concern: If each of the $k$ prime factors of $\mfs$, excluding $(1 + i)$, lies above a distinct prime $p \equiv 1 \pmod 4$, then there are $2^{k-1}$ such ideals $\mfs$ of a given norm. Thus, if we extend the sum on the left-hand side of (\ref{sum1}) to range over all $\mfs$ counted in primed sums (cf. the discussion above Lemma \ref{lowerboundonm}), we obtain
\begin{align*}
\sideset{}{'}\sum_{\mfs} \frac{1}{\Phi(\mfs)} \geq \frac{2^{k-1}}{(k-1)!} \Bigg(\frac{1}{2}\log_2x &- 2\log_3 x + O(1)\Bigg)^{k-1} \\ 
 &\times \Bigg(1 - \binom{k-1}{2} \Big(\frac{1}{S_1^2}\Big)\sum_{\mfp \in \cP} \frac{1}{\Phi(\mfp)^2}\Bigg).
\end{align*}
The quantity $\binom{k-1}{2}$ is bounded from above by $\lceil \gamma\log_2 x \rceil^2$, and the sum on $1/\Phi(\mfp)^2$ tends to 0 as $x \to \infty$. Therefore,
\begin{align*}
1 - \binom{k-1}{2} \Big(\frac{1}{S_1^2}\Big)\sum_{\mfp \in \cP} \frac{1}{\Phi(\mfp)^2} \geq 1 - 4\gamma^2\sum_{\mfp \in \cP} \frac{1}{\Phi(\mfp)^2} \geq \frac{1}{2}
\end{align*}
for large enough $x$, and so
\[
\frac{x\log_2 x}{(\log x)^2}\sideset{}{'}\sum_{\mfs} \frac{1}{\Phi(\mfs)} \gg \frac{x\log_2 x(\log_2 x + O(\log_3 x))^{k-1}}{(k-1)!(\log x)^2},
\]
as desired.
\end{proof}

With $k = \lfloor \gamma\log_2x \rfloor + 2$ and by the more precise version of Stirling's formula $n! \sim \sqrt{2\pi n}(n/e)^n$, we have
\begin{align*}
\frac{(\log_2 x + O(\log_3 x))^{k-1}}{(k-1)!} &\gg \frac{1}{\sqrt{\log_2 x}} \bigg(\frac{e\log_2 x + O(\log_3 x)}{\lfloor\gamma\log_2 x\rfloor}\bigg)^{\lceil\gamma\log_2 x\rceil} \\
&= \frac{1}{\sqrt{\log_2 x}}\bigg(\frac{e}{\gamma}\Big(1 + O\Big(\frac{\log_3x}{\log_2 x}\Big)\Big)\bigg)^{\lceil\gamma\log_2 x\rceil} \\
&= (\log x)^{\gamma - \gamma\log\gamma + o(1)}.
\end{align*}
This yields a main term of the shape
\[
\frac{x}{(\log x)^{2 + \gamma\log\gamma - \gamma+ o(1)}},
\]
which completes the proof of Theorem \ref{lowerbound} in the case $\gamma > 1$.

\subsection{The case $\mathbf{0 < \gamma < 1}$.}\label{smallgamma} Above, we used the fact that if $\pi - 1$ is divisible by certain $\mfs \subset \bZ[i]$ with $\omega(\Vert \mfs \Vert) = k$, then $\Vert \pi - 1 \Vert$ will have at least $k > \gamma\log_2 x$ prime factors. The case $0 < \gamma < 1$ is requires more care: We need to ensure that the quantity $\Vert \pi - 1 \Vert / \Vert \mfs \Vert$ does not have too many prime factors.

\begin{lemma}\label{smallgammasieve}
For any $\mfs \subset \bZ[i]$ satisfying properties A through F listed below Theorem \ref{lowerbound}, we have
\[
\#\{\pi \in \cM_\mfs : \omega\bigg(\frac{\Vert\pi - 1\Vert}{\Vert\mfs\Vert}\bigg) > \frac{\log_2x}{\log_4x}\} \ll \frac{x}{\Vert\mfs\Vert(\log x)^A}.
\]
\end{lemma}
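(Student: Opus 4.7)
The plan is a moment-method argument in the style of Turán--Kubilius. Set $K := \lfloor \log_2 x/\log_4 x\rfloor$ and $z_0 := x^{1/(100\gamma\log_2 x)}$. Using the elementary inequality $\mathbf{1}[\omega(n) > K] \leq \binom{\omega(n)}{K+1}$ together with the fact that every prime divisor of $\Vert\pi-1\Vert/\Vert\mfs\Vert$ exceeds $z_0$ for $\pi \in \cM_\mfs$, I would start from
\[
\#\{\pi \in \cM_\mfs : \omega > K\} \;\leq\; \sum_{\substack{q_1 < \cdots < q_{K+1}\\ q_i > z_0}} \#\{\pi \in \cM_\mfs : q_i \mid \Vert\pi-1\Vert \text{ for all }i\},
\]
where the $q_i$ are distinct rational primes.

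Next, each divisibility $q_i \mid \Vert\pi-1\Vert$ is equivalent to $\mfp_i \mid \pi-1$ for some prime ideal $\mfp_i \mid q_i$ of $\bZ[i]$, of which there are at most two per $q_i$. The inner count is thus bounded by $2^{K+1} \max_{(\mfp_i)} \pi(x; \mfs\mfp_1\cdots\mfp_{K+1}, 1)$. I would then apply the Brun--Titchmarsh inequality in $\bZ[i]$, $\pi(x;\mfq,1) \ll x/(\Phi(\mfq)\log(2x/\Vert\mfq\Vert))$ for $\Vert\mfq\Vert \leq 2x$, together with the additive Mertens estimate from Proposition \ref{mertens},
\[
\sum_{z_0 < \Vert\mfp\Vert \leq 4x}\frac{1}{\Phi(\mfp)} = \log_3 x + O(1).
\]
After interchanging summations, the main contribution becomes
\[
\ll \frac{x}{\Phi(\mfs)\log x}\cdot \frac{(C\log_3 x)^{K+1}}{(K+1)!}
\]
for an absolute constant $C$.

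To finish, Stirling gives $(K+1)! \gg ((K+1)/e)^{K+1}$, so
\[
\frac{(C\log_3 x)^{K+1}}{(K+1)!} \ll \left(\frac{eC\log_3 x}{K+1}\right)^{K+1} = \left(\frac{eC\log_3 x\,\log_4 x}{\log_2 x}(1+o(1))\right)^{K+1}.
\]
Taking logs and using $K+1 \sim \log_2 x/\log_4 x$, this equals $(\log x)^{-(\log_3 x/\log_4 x)(1+o(1))}$, which (since $\log_3 x/\log_4 x\to\infty$) beats any fixed negative power of $\log x$. Absorbing the factor $\Vert\mfs\Vert/\Phi(\mfs) \ll \log_2 x$ (the $\bZ[i]$-analogue of the Hardy--Wright bound $n/\phi(n) \ll \log\log n$) into the savings then yields the claimed $\ll x/(\Vert\mfs\Vert(\log x)^A)$ for every fixed $A>0$.

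The main technical obstacle I anticipate is handling tuples for which $\Vert \mfs\mfp_1\cdots\mfp_{K+1}\Vert$ is close to $x$, where the Brun--Titchmarsh $\log$-factor degenerates. For those I would switch to the crude bound $\pi(x;\mfq,1) \ll 1 + x/\Vert\mfq\Vert$. The $x/\Vert\mfq\Vert$-piece is absorbed into the same Mertens--Stirling estimate above, while the ``$+1$''-piece requires a Hardy--Ramanujan-type count of squarefree ideals $\mfb \subset \bZ[i]$ with $\omega(\mfb) = K+1$, $P^-(\mfb) > z_0$, and $\Vert\mfb\Vert \leq 4x/\Vert\mfs\Vert$. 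Such a count is $\ll (x/(\Vert\mfs\Vert\log z_0))(\log_3 x + O(1))^K/K!$, and one more application of Stirling yields a bound of the required shape.
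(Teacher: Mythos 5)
Your proof is correct, but it takes a genuinely different route from the paper's. You run a moment-method argument: the inequality $\mathbf{1}[\omega(n) > K] \leq \binom{\omega(n)}{K+1}$, expanded as a sum over $(K+1)$-tuples of prime ideals above rational primes $> z_0$, each term then controlled by a Brun--Titchmarsh bound $\pi(x;\mfq,1) \ll x/(\Phi(\mfq)\log(2x/\Vert\mfq\Vert))$ in $\bZ[i]$. That is a real sieve input which the paper never needs for this lemma, and it forces you to split off and separately handle moduli $\mfs\mfp_1\cdots\mfp_{K+1}$ of norm close to $x$ (your ``technical obstacle''). The paper instead passes from $\pi \in \cM_\mfs$ to the ideal $\mfa = ((\pi-1)/\sigma)$ --- a map that is at most $4$-to-$1$ --- observes that $\Vert\mfa\Vert \leq 2x/\Vert\mfs\Vert$ and that all prime ideal divisors of $\mfa$ have norm exceeding $z_0$, and then uses the trivial pointwise bound $1 \leq (2x/\Vert\mfs\Vert)/\Vert\mfa\Vert$ to replace the count by $\frac{2x}{\Vert\mfs\Vert}\sum_{\mfa} 1/\Vert\mfa\Vert$. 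The latter sum is then dominated termwise by the multinomial expansion
\[
\sum_{\ell > K}\frac{1}{\ell!}\Bigg(\sum_{z_0 \leq \Vert\mfp\Vert \leq 2x/\Vert\mfs\Vert}\sum_{m \geq 1}\frac{1}{\Vert\mfp\Vert^m}\Bigg)^{\ell},
\]
and both of you finish with the same Mertens computation ($\sum_{\Vert\mfp\Vert > z_0} 1/\Vert\mfp\Vert = \log_3 x + O(1)$) followed by Stirling. So the final analytic step is identical, but the paper's reduction is cheaper: no Brun--Titchmarsh, no $\Phi(\mfs)$ versus $\Vert\mfs\Vert$ conversion, and no degenerate large-modulus case to patch with an auxiliary Hardy--Ramanujan count. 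If you do want to go your way, be aware that you will need a citation for Brun--Titchmarsh over $\bZ[i]$ (it is not stated in this paper), and note that the ``additive Mertens'' you invoke is the remark following Proposition~\ref{mertens} rather than the proposition itself.
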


Upon discarding those $\pi$ counted by the above lemma, the remaining $\pi$ will have the property that $\omega(\Vert\pi - 1 \Vert) \in [k, k + \log_2x/\log_4x]$. Choosing $k$ to be the greatest integer strictly less than $\gamma\log_2 x - \log_2x/\log_4x$ ensures that $\Vert \pi - 1 \Vert < \gamma\log_2 x$.

\begin{proof}[Proof of Lemma \ref{smallgammasieve}.]
We begin with the observation that, for any $\mfs \subset \bZ[i]$ under consideration and $\pi \in \cM_\mfs$, we have $\Vert \pi - 1 \Vert/\Vert \mfs \Vert \leq 2x/\Vert \mfs \Vert$. Therefore, we estimate
\[
\sum_{\substack{\Vert \mfa \Vert \leq \frac{2x}{\Vert \mfs \Vert} \\ \omega(\Vert \mfa \Vert) > \log_2x/\log_4x \\ P^{-}(\Vert \mfa \Vert) > x^{1/100\gamma\log_2 x}}} 1 \leq \frac{2x}{\Vert\mfs\Vert} \sum_{\substack{\Vert \mfa \Vert \leq \frac{2x}{\Vert \mfs \Vert} \\ \omega(\Vert \mfa \Vert) > \log_2x/\log_4x \\ P^{-}(\Vert \mfa \Vert) > x^{1/100\gamma\log_2 x}}} \frac{1}{\Vert\mfa\Vert}. 
\]
Noting that $\omega(\Vert \mfa \Vert) \leq \omega(\mfa)$ for any $\mfa \subset \bZ[i]$, by Theorem \ref{mertens} and Stirling's formula, we have
\begin{align*}
\sum_{\substack{\Vert \mfa \Vert \leq \frac{2x}{\Vert \mfs \Vert} \\ \omega(\Vert \mfa \Vert) > \log_2x/\log_4x \\ P^{-}(\Vert \mfa \Vert) > x^{1/100\log_2 x}}} \frac{1}{\Vert\mfa\Vert} &\leq \sum_{\substack{\Vert \mfa \Vert \leq \frac{2x}{\Vert \mfs \Vert} \\ \omega(\mfa) > \log_2x/\log_4x \\ P^{-}(\Vert \mfa \Vert) > x^{1/100\log_2 x}}} \frac{1}{\Vert\mfa\Vert} \\ &\leq \sum_{\ell > \log_2x/\log_4x} \frac{1}{\ell!} \Big (\sum_{x^{1/100\log_2 x} \leq \Vert \mfp \Vert \leq \frac{2x}{\Vert\mfs\Vert}} \sum_{m = 1}^\infty \frac{1}{\Vert \mfp \Vert^m} \Big)^\ell \\
&\ll \sum_{\ell > \log_2x/\log_4x} \Big( \frac{e\log_3x + O(1)}{\ell} \Big)^\ell.
\end{align*}

For each $\ell > \log_2x/\log_4x$, we have $(e\log_3x + O(1))/\ell < 1/2$. Thus
\begin{align*}
\sum_{\ell > \log_2x/\log_4x} \Big( \frac{e\log_3x + O(1)}{\ell} \Big)^\ell &\ll \Big( \frac{e\log_3x + O(1)}{\lfloor \log_2x/\log_4x \rfloor + 1} \Big)^{\lfloor \log_2x/\log_4x \rfloor + 1} \\
&\ll \Big(\frac{1}{(\log_2x)^{1 + o(1)}}\Big)^{\log_2x/\log_4x} \ll e^{-2\log_2x\log_3x/\log_4x}.
\end{align*}
This last expression is smaller than $(\log x)^{-A}$, for any $A > 0$. Therefore, for any fixed $A > 0$,
\[
\#\{\pi \in \cM_\mfs : \omega\bigg( \frac{\Vert \pi - 1 \Vert}{\Vert \mfs \Vert}\bigg) > \frac{\log_2x}{\log_4x}\} \ll \frac{x}{\Vert\mfs\Vert(\log x)^A}. \qedhere
\]
\end{proof}

Write
\[
\cM_\mfs' = \{\pi \in \cM_\mfs : \omega\bigg(\frac{\Vert\pi - 1\Vert}{\Vert\mfs\Vert}\bigg) \leq \frac{\log_2x}{\log_4x}\}.
\]
Lemmas \ref{msigmasieve} and \ref{smallgammasieve} show that $\#\cM_\mfs'$ satisfies
\begin{align*}
\#\cM_\mfs' \geq c \cdot \frac{ x\log_2 x}{\Phi(\mfs)(\log x)^2} &+ O\bigg(\sum_{\substack{\mfu \mid \mfP \\ \omega(\mfu) \leq m}} |r(\mfu\mfs)|\bigg) \\ &+ O\bigg(\frac{1}{\Phi(\mfs)}\frac{\Li(x)}{(\log x)^{22}}\bigg) + O\bigg(\frac{x}{\Vert\mfs\Vert(\log x)^A}\bigg) + O(\sqrt{x}),
\end{align*}
for any $A > 0$. Here, all quantities are defined as in the previous section. Just as before, we sum this quantity over $\mfs \subset \bZ[i]$ satisfying conditions A through F listed below Theorem \ref{lowerbound}. Letting $'$ on a sum indicate a restriction to such $\mfs$, we have, by the same calculations as before,
\[
\cM' \gg \frac{x\log_2 x(\log_2 x + O(\log_3 x))^{k-1}}{(k-1)!(\log x)^2},
\]
where 
\[
\cM' = \sideset{}{'}\sum_{\mfs} \#\cM'_\mfs.
\]
Recall that $k$ is chosen to be the largest integer strictly less than $\gamma\log_2x - \log_2x/\log_4x$; then by Stirling's formula,
\begin{align*}
\frac{(\log_2 x + O(\log_3 x))^{k-1}}{(k-1)!} &\gg \frac{1}{\sqrt{\log_2x}}\Big(\frac{e\log_2x + O(\log_3x)}{k - 1}\Big)^{k-1} \\
&\gg \frac{1}{\sqrt{\log_2x}}\Big(\frac{e}{\gamma}\Big(1 + O\big(\frac{1}{\log_4x}\big)\Big)^{\gamma\log_2 x - \log_2x/\log_4x - 1} \\
&\gg (\log x)^{\gamma\log\gamma - \gamma + o(1)}.
\end{align*}
A final assembly of estimates yields Theorem \ref{lowerbound} in the case $0 < \gamma < 1$.

\bibliographystyle{amsalpha}
\bibliography{refs}

\providecommand{\bysame}{\leavevmode\hbox to3em{\hrulefill}\thinspace}
\providecommand{\MR}{\relax\ifhmode\unskip\space\fi MR }
\providecommand{\MRhref}[2]{%
  \href{http://www.ams.org/mathscinet-getitem?mr=#1}{#2}
}
\providecommand{\href}[2]{#2}
\begin{thebibliography}{Hux71}

\bibitem[Coj05]{coj05}
A.~C. Cojocaru, \emph{Reductions of an elliptic curve with almost prime
  orders}, Acta Arith. \textbf{119} (2005), no.~3, 265--289. \MR{2167436
  (2006f:11111)}

\bibitem[dB66]{db66}
N.~G. de~Bruijn, \emph{On the number of positive integers $\leq x$ and free of
  prime factors $> y$. {II}}, Indag. Math. \textbf{28} (1966), 239 -- 247.

\bibitem[EN79]{Erd1978-1979}
P.~Erd{\H o}s and J-L. Nicolas, \emph{Sur la fonction «nombre de facteurs
  premiers de n»}, S{\'e}minaire Delange-Pisot-Poitou. Th{\'e}orie des nombres
  \textbf{20} (1978-1979), no.~2, 1--19.

\bibitem[HR83]{halroth83}
H.~Halberstam and K.~F. Roth, \emph{Sequences}, second ed., Springer-Verlag,
  New York-Berlin, 1983. \MR{687978 (83m:10094)}

\bibitem[Hux71]{hux71}
M.~N. Huxley, \emph{The large sieve inequality for algebraic number fields.
  {III}. {Z}ero-density results}, J. London Math. Soc. (2) \textbf{3} (1971),
  233--240. \MR{0276196 (43 \#1944)}

\bibitem[HW00]{hw00}
G.H. Hardy and E.M. Wright, \emph{An introduction to the theory of numbers},
  fifth ed., Oxford University Press, Oxford, 2000.

\bibitem[JU08]{ju08}
J.~Jim{\'e}nez~Urroz, \emph{Almost prime orders of {CM} elliptic curves modulo
  {$p$}}, Algorithmic number theory, Lecture Notes in Comput. Sci., vol. 5011,
  Springer, Berlin, 2008, pp.~74--87. \MR{2467838 (2009m:11081)}

\bibitem[Liu06]{liu06}
Y-R. Liu, \emph{Prime analogues of the {E}rd{\H o}s-{K}ac theorem for elliptic
  curves}, J. Number Theory \textbf{119} (2006), no.~2, 155--170. \MR{2250042
  (2007e:11066)}

\bibitem[Pol09]{polnabd}
P.~Pollack, \emph{Not always buried deep}, American Mathematical Society,
  Providence, RI, 2009. \MR{2555430 (2010i:11003)}

\bibitem[Polar]{poltitec}
\bysame, \emph{A {T}itchmarsh divisor problem for elliptic curves}, Math. Proc.
  Cambridge Philos. Soc. (to appear).

\bibitem[Ros99]{ros99}
M.~Rosen, \emph{A generalization of {M}ertens' theorem}, J. Ramanujan Math.
  Soc. \textbf{14} (1999), no.~1, 1--19. \MR{1700882 (2000e:11143)}

\end{thebibliography}

\end{document}